\newtheorem{theorem}{\bf Theorem}[section]
\newtheorem{corollary}[theorem]{\bf Corollary}
\newtheorem{lemma}[theorem]{\bf Lemma}
\newtheorem{proposition}[theorem]{\bf Proposition}
\newtheorem{definition}[theorem]{\bf Definition}
\newcommand{\qed}{\hfill $\square$ \bigskip}
\newcommand{\Fib}{{\cal F}}
\newcommand{\ZZ}{\mathbb{Z}}
\newcommand{\A}{{\mathcal A}}
\newcommand{\supp}{{\rm supp}}
\newcommand{\st}{ ~|~ }
\numberwithin{equation}{section}
\begin{document}
\modulolinenumbers[5]
%\linenumbers commentaires a supprimer pour faire apparaitre les lignes
\title{$p$-th order generalized Fibonacci cubes 
 and maximal cubes in Fibonacci $p$-cubes}
\author{
Michel Mollard\footnote{Institut Fourier, CNRS, Universit\'e Grenoble Alpes, France email: michel.mollard@univ-grenoble-alpes.fr}
}
\date{\today}
\maketitle

\begin{abstract}
The Fibonacci cube $\Gamma_n$ is the subgraph of the hypercube $Q_n$ induced by vertices with no consecutive $1$s. We study a one parameter generalization, $p$-th order Fibonacci cubes $\Gamma^{(p)}_n$, which are subgraphs of  $Q_n$ induced by strings without $p$ consecutive 1s. We show the link  between vertices of  $\Gamma^{(p)}_n$  
and compositions of integers with parts in $\{1,2,\dots,p\}$. Among other eumerative properties, we study the order, size and cube polynomial of $\Gamma^{(p)}_n$ as well as their generating functions. Many of the given expressions are similar to those for Fibonacci cubes, where the p-nomial coefficients play the role of binomial coefficients. We also show that maximal induced hypercubes in Fibonacci $p$-cubes $\Gamma^{p}_n$, another generalization of Fibonacci cubes, are connected to vertices of $p+1$-th order Fibonacci cubes. We use this link to determine the maximal cube polynomial of Fibonacci $p$-cubes.
\end{abstract}

\noindent
{\bf Keywords:} Hypercube, Fibonacci cube, $p$-th order Fibonacci cube, Fibonacci $p$-cube, Cube polynomial, Composition of integers, $p$-nomial coefficient. 

\noindent
{\bf AMS Subj. Class. }:  05C12, 05C30, 05C31, 05C60, 05C90

%%%%%%%%%%%%%%%%%%%%%%%%%%%%%%%%%%%%%%%%%%%%%%%%%%%%%%%%%%
\section{Introduction }

The {\em Fibonacci cube} of dimension $n$, denoted as $\Gamma_n$, is the subgraph of the hypercube $Q_n$ induced by vertices with no consecutive 1s. This graph was introduced in \cite{H-1993a} as an interconnection network.

$\Gamma_n$ is an isometric subgraph and is inspired by the Fibonacci numbers. It has attractive recurrent structures such as
its decomposition into two subgraphs which are also Fibonacci cubes themselves.  Structural properties of these graphs were more extensively 
studied afterwards; see for example the survey~\cite{K-2013a} and the recent book~\cite{EKM-2023}. \\

Not only the investigation of the 
properties of Fibonacci cubes  attracted many researchers, but it
has also led to the development of a variety of interesting 
generalizations and variations covered by a whole chapter in the book
\cite{EKM-2023}. Among these families of graphs are the cyclic version the Lucas cubes,
generalized Fibonacci cubes~\cite{IKR-2012a} and Fibonacci $p$-cubes~\cite{WY-2022}.

Suppose $f$ is an arbitrary binary string. The generalized Fibonacci cube $Q_n(f)$ is the subgraph of $Q_n$ induced by strings of length $n$ that do not admit $f$ as substring.
 
In particular, for an integer $p$, the graph $Q_n(1^p)$ has been introduced already as an interconnection network by Hsu and Chung ~\cite{HC-1993}. Following Salvi~\cite{S-1996} this graph is called  $p$-th order Fibonacci cube (of dimension $n$) in this paper. We will also use for it the notation $\Gamma^{(p)}_n$ proposed by Belbachir and Ould-Mohamed~\cite{BO-2020}.
 
Rooting properties of $p$-th order Fibonacci cubes have been studied in~\cite{WG-2003}.
They are  are mostly Hamiltonian~\cite{LWC-1994} and even bipancyclic~\cite{S-1996}.
$\Gamma^{(p)}_n$ is a  median graph only in  the cases $p=2$ of Fibonacci cubes and  for  arbitrary $p$ and  dimension $n\leq p$~\cite{OZ-2013}. $p$-th order Fibonacci cubes are daisy cubes~\cite{KM-2019a}.

Some enumerative properties like order, size and cube polynomial have been determined for the case $p=3$ in~\cite{BO-2020} where they are called Tribonacci cubes. A linear time algorithm for the recognition of Tribonacci cubes is given in~\cite{RV-2016}.

In this article, we propose to study enumerative properties for the general case of $p$-th order Fibonacci cubes using an approach different from that used for Tribonacci cubes in~\cite{BO-2020}.

This paper is organized as follows. 
After  preliminaries in Section~\ref{sec:basic}, we will recall some results about compositions and $p$-nomial coefficients. We will study the weight distribution of vertices $\Gamma^{(p)}_n$ in Section~\ref{sec:enumprop} and edges in Section~\ref{sec:edge}. The cube polynomial of $\Gamma_n^{(p)}$ is determined in Section~\ref{sec:distancecube}.
   
Recently~\cite{WY-2022} Wei and  Yang introduced  Fibonacci $p$-cubes $\Gamma_n^p$  which are subgraphs of hypercubes induced by strings where there is at least $p$ consecutive $0$s between  two $1$s. In  Section~\ref{sec:maximalcube} we will study maximal hypercubes in   $\Gamma_n^p$ exhibiting a bijection  between maximal hypercubes in Fibonacci $p$-cubes $\Gamma^{p}_n$ and vertices of weight $n-(p+1)k+p$ in the $p+1$-th order Fibonacci cube $\Gamma^{(p+1)}_{n-pk+p}$.

\section{Preliminaries}
\label{sec:basic}
%%%%%%%%%%%%%%%%%%%%%%%%%%%%%%%%%%%%%%%%%%%%%%%%%%%%%%%%%%
%%%%%%%%%%%%%%%%%%%%%%%%%%%%%%%%%%%%%%%%%%%%%%%%%%%%%%%%%%
We will next give  some concepts and notations needed in this paper.

We denote by $[\![a,n]\!]$ the set of integers $i$ such that $a\leq i \leq n$.

Let $(F_n)_{n\geq0}$ be the \emph{Fibonacci numbers}:
$F_0 = 0$, $F_1=1$, $F_{n} = F_{n-1} + F_{n-2}$ for $n\geq 2$.

Let $B=\{0,1\}$. If will be convenient to identify elements $u = (u_1,\ldots, u_n)\in B^n$ and strings of length $n$ over $B$. We thus  briefly write $u$ as $u_1\ldots u_n$ and call $u_i$ the $i$th coordinate of $u$. For $j\in[\![1,n]\!]$ the string  $u+\delta_j$ will be the string $v$ such that $v_j\neq u_j$ and $v_i=u_i$ for all $i\neq j$.

We will use the power notation for the concatenation of bits, for instance $0^n = 0\ldots 0\in B^n$. Let $s$ be a binary string and ${\cal A}$ a set of binary strings we will use the notations  $s {\cal A}$ and ${\cal A} s$ for the sets $s {\cal A}=\{su \st u\in {\cal A}\}$ and ${\cal A} s=\{us \st u\in {\cal A}\}$.

The vertex set of $Q_n$, the \emph{hypercube of dimension $n$},  is the set $B^n$, two vertices being adjacent iff they differ in precisely one coordinate. We will say that an edge $uv$ of $Q_n$ uses the direction $i$ if $u$ and $v$ differ in the coordinate $i$, thus if $v=u+\delta_i$. 

The \emph{distance} between two vertices $u$ and $v$ of  a graph $G$  is the 
number of edges on a shortest $u,v$-path. It is immediate that the distance between two vertices of $Q_n$ is the number of coordinates the strings differ, sometime called Hamming distance.

A {\em Fibonacci string} is a binary string without consecutive 1s. We will call ${\cal F}_n$ the set of Fibonacci strings of length $n$.  

The {\em Fibonacci cube} $\Gamma_n$ ($n\geq 1$) is the subgraph of $Q_n$ induced by $\Fib_{n}$ the set of Fibonacci strings of length $n$. Because of the empty string $\lambda$, $\Gamma_0 = K_1$. 

It is well known that for any integer $n$, $|\Fib_{n}|=|V(\Gamma_{n})|=F_{n+2}$.

Many generalizations of Fibonacci cubes have been proposed. Among them daisy cubes which we will recall at the end of this section and Fibonacci $(p,r)$-cubes~\cite{EA-1997}. Wei and Yang studied specifically Fibonacci $p$-cubes which are Fibonacci $(p,r)$-cubes with $r=1$~\cite{WY-2022}. Other results about Fibonacci $p$-cubes can be found in~\cite{M-2025b}.

For an integer $p\geq1$, A {\em Fibonacci $p$-string} of length $n$ is a binary string where  consecutive 1s are separated by at least $p$ 0s.
 Let ${\cal F}^p_n$ be the set of Fibonacci $p$-strings of length $n$. Then the {\em Fibonacci $p$-cube}, $\Gamma^p_n$ is the subgraph of $Q_n$ induced by $\Fib^p_{n}$. Again because of the empty string  $\Gamma^p_0 = K_1$. Note that the Fibonacci $1$-cube $\Gamma^1_n$ is the classical Fibonacci cubes $\Gamma_n$.

%Similarly a binary string $u$ of length $n$ is a {\em Lucas $p$-string} if there are at least $p$ 0s between two 1s of $u$ in a circular manner. The {\em Lucas $p$-cube}, $\Luc^p_{n}$,  is the subgraph of $Q_n$ induced by Lucas $p$-strings.

Let $(F^p_{n})_{n\geq 0}$ be the \emph{Fibonacci $p$-numbers} defined by the recursion
\begin{equation*}
F^p_0 = 0, F^p_i=1\text{ for }i\in[\![1,p]\!]\text{, and }F^p_{n} = F^p_{n-1} + F^p_{n-p-1}\text{ for }n\geq p+1. 
\end{equation*}

From this definition, by the usual method, the generating function of the sequence $(F^p_{n})_{n\geq 0}$ is 
\begin{equation*}
\sum_{n\geq 0}F^p_{n}t^n=\frac{t}{1-t-t^{p+1}}.
\end{equation*}

It is immediate that $F^p_{p+1}=1$ and more generally $F^p_{n}=n-p$ for $n\in [\![p+1,2p+2]\!]$. Note that the $(F^1_{n})_{n\geq 0}$ are the Fibonacci numbers.

As noticed in \cite[Theorem~4.1]{WY-2022} the order of $\Gamma^p_n$ is $|{\cal F}^p_n|=F^p_{n+p+1}$.

Indeed  a string in ${\cal F}^p_n$, $n\geq p+1$, can be be uniquely decomposed as the concatenation of $10^p$ with a string of ${\cal F}^p_{n-p-1}$ or as the concatenation of $0$ with a string of ${\cal F}^p_{n-1}$. Furthermore $|{\cal F}^p_n|=n+1=F^p_{n+p+1}$ for $n\leq p$, thus by induction  $|{\cal F}^p_n|=F^p_{n+p+1}$ for any $n$.

An other classic generalization of Fibonacci numbers is to consider sequences in which each sequence element is the sum of the previous $p$ elements. For an integer $p\ge2$ the \emph{$p$-th order generalized Fibonacci numbers} $F_n^{(p)}$ are then  defined by 
\begin{eqnarray*}
F_n^{(p)}=0\text{ for }n\in[\![0,p-2]\!] \text{ and } F_{p-1}^{(p)}=1\nonumber \\ F_n^{(p)}=F_{n-1}^{(p)}+F_{n-2}^{(p)}+\dots+F_{n-p}^{(p)}\text{ for }n\geq p.
\end{eqnarray*}
(see OEIS sequences A000073, A000078, A001591 and array A0922921).  The $(F^{(2)}_{n})_{n\geq 0}$ are the Fibonacci numbers and  obviously  $F_n^{(p)}=2^{n-p}$ for $n\in[\![p,2p-1]\!]$.

It is immediate that the generating function of the sequence $(F^{(p)}_{n})_{n\geq 0}$ is 
\begin{equation}\label{eq:gen(p)}
\sum_{n\geq 0}F^{(p)}_{n}t^n=\frac{t^{p-1}}{1-t-t^2-\dots-t^{p}}.
\end{equation}

For an integer $p\geq2$, a {\em $p$-th order Fibonacci string} is a binary string that does not contain $1^p$ as a substring. In other words $p$-th order Fibonacci strings  are strings with at most $p-1$ consecutive $1$s.

 Let ${\cal F}^{(p)}_n$ be the set of $p$-th order Fibonacci strings of length $n$. Note that ${\cal F}^{(p)}_n=B^n$ for $n\leq p-1$. Then the {\em generalized Fibonacci cube} $\Gamma^{(p)}_n=Q_n(1^p)$  is the subgraph of $Q_n$ induced by ${\cal F}^{(p)}_n$. 
Again because of the empty string  $\Gamma^{(p)}_0 = K_1$ and more generally $\Gamma^{(p)}_n = Q_n$ for $n\leq p-1$. Note that  the $\Gamma^{(2)}_n$ are the classical Fibonacci cubes $\Gamma_n$.

The {\em Hamming weight} $w(b)$ of a binary string $b$ is  the number of occurrences of $1$ in $b$. 
It is immediate that the number of strings in $B^n$ of Hamming weight $w$ is $\binom{n}{w}$.

The following result is well-known (see~\cite{HIK-2011} for example).

\begin{proposition}\label{pro:bt}
In every induced subgraph $H$ of $Q_n$ isomorphic to $Q_k$ there exists a unique vertex of minimal Hamming weight, \emph{the bottom vertex} $b(H)$. There exists also a unique vertex of maximal Hamming weight, the \emph{top vertex} $t(H)$. 
Furthermore $b(H)$ and $t(H)$ are at distance $k$ and characterize $H$ among the subgraphs of $Q_n$ isomorphic to $Q_k$. 
\end{proposition}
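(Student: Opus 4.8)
The plan is to characterize an induced $Q_k$ inside $Q_n$ purely in terms of the set $S$ of directions it uses, together with the fixed coordinates outside of $S$. First I would observe that any induced subcube $H\cong Q_k$ is determined by a subset $S\subseteq[\![1,n]\!]$ with $|S|=k$ and a string $c$ on the complementary coordinates: namely $V(H)=\{u\in B^n \st u_i=c_i \text{ for all } i\notin S\}$. To see this, pick any vertex $v\in V(H)$; the $k$ neighbors of $v$ in $H$ are obtained by flipping $k$ distinct coordinates (since $H\cong Q_k$ has $v$ of degree $k$ and $Q_n$ is triangle-free, these flips are in $k$ distinct directions), and one checks that walking along edges of $H$ can only ever flip coordinates in that fixed set $S$ of size $k$, while $H$ being \emph{induced} forces every such string to actually lie in $H$. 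This identifies $H$ with the subcube spanned by $S$ over the base point $c$.

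Given this description, the bottom vertex is the string $b(H)$ that agrees with $c$ off $S$ and has $0$ in every coordinate of $S$, and the top vertex $t(H)$ agrees with $c$ off $S$ and has $1$ in every coordinate of $S$. I would verify: (i) $b(H)\in V(H)$ and for any $u\in V(H)$, $w(u)=w(c\restriction_{S^c})+|\{i\in S\st u_i=1\}|\ge w(b(H))$, with equality only for $u=b(H)$, giving uniqueness of the minimum; the argument for $t(H)$ is symmetric. (ii) The Hamming distance between $b(H)$ and $t(H)$ is exactly the number of coordinates where they differ, which is $|S|=k$. (iii) Conversely, from $b(H)$ and $t(H)$ one recovers $S$ as the set of coordinates where they differ and $c$ as their common value elsewhere, so the pair $(b(H),t(H))$ determines $H$; and any pair of vertices at distance $k$ spans exactly one induced $Q_k$ in this way, so different induced $k$-subcubes give different pairs.

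The main obstacle — really the only nontrivial point — is step (i) of the first paragraph: justifying that once you know the $k$ edge-directions available at one vertex $v$, no edge of $H$ anywhere uses a direction outside that set. The clean way is to use that $H\cong Q_k$ is \emph{connected} and \emph{vertex-transitive with all vertices of degree $k$}, and that $Q_n$ is bipartite with no $4$-cycles beyond the ``coordinate'' ones: a $4$-cycle in $Q_n$ must flip two coordinates $i,j$ and flip them back, so the edge-direction multiset is locally consistent; propagating this around $H$ (which, being a hypercube, is generated by its $4$-cycles) shows the direction set is globally constant. Everything else is a routine weight computation. Since the proposition is attributed as well-known, I would keep the write-up brief, emphasizing the subcube-from-$(S,c)$ normal form and then reading off $b(H)$, $t(H)$ and the distance from it.

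\begin{proof}
Let $H$ be an induced subgraph of $Q_n$ with $H\cong Q_k$, and fix $v\in V(H)$. Since $\deg_H(v)=k$ and $Q_n$ contains no triangles, the $k$ edges of $H$ at $v$ use $k$ pairwise distinct directions; let $S\subseteq[\![1,n]\!]$, $|S|=k$, be this set of directions, and let $c$ be the restriction of $v$ to the coordinates not in $S$. Because $Q_k$ is connected and generated by its $4$-cycles, and every $4$-cycle of $Q_n$ uses exactly two directions (each twice), propagating from $v$ shows that every edge of $H$ uses a direction in $S$; hence every vertex of $H$ agrees with $c$ on the coordinates outside $S$. Conversely $H$ has $2^k$ vertices and there are only $2^k$ strings agreeing with $c$ off $S$, and each such string is reachable from $v$ inside $Q_n$ using only directions in $S$; since $H$ is induced and of the right size, $V(H)=\{u\in B^n \st u_i=c_i \text{ for all } i\notin S\}$.

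Now let $b$ (resp.\ $t$) be the string agreeing with $c$ off $S$ and equal to $0$ (resp.\ $1$) on $S$. For any $u\in V(H)$ we have $w(u)=w(c)+|\{i\in S\st u_i=1\}|$, which is at least $w(b)=w(c)$ with equality iff $u=b$, and at most $w(t)=w(c)+k$ with equality iff $u=t$. Thus $b=b(H)$ and $t=t(H)$ are the unique vertices of minimal and maximal Hamming weight in $H$. They differ in exactly the $k$ coordinates of $S$, so their Hamming distance, which equals the distance in $Q_n$, is $k$. Finally, from $b(H)$ and $t(H)$ one recovers $S$ as the set of coordinates in which they differ and $c$ as their common value elsewhere, hence the pair $(b(H),t(H))$ determines $V(H)$; so distinct induced $k$-subcubes of $Q_n$ have distinct (bottom, top) pairs, and the pair characterizes $H$.
\end{proof}
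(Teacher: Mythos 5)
Your proof is correct. Note, however, that the paper does not prove this proposition at all: it is stated as well-known with a citation to the Handbook of Product Graphs, so there is no in-paper argument to compare against. Your write-up is essentially the standard proof one would expect behind that citation: reduce an induced $Q_k$ to the normal form ``fixed string $c$ off a direction set $S$, free on $S$,'' then read off $b(H)$, $t(H)$, their distance $|S|=k$, and the fact that the pair $(b(H),t(H))$ recovers $(S,c)$. The one step you rightly flag as the only nontrivial point --- that all edges of $H$ use directions from the single set $S$ seen at one vertex --- is handled correctly, though the phrase ``generated by its $4$-cycles'' deserves one more explicit sentence: any two incident edges of $Q_k$ lie on a common $4$-cycle, every $4$-cycle of $Q_n$ uses exactly two directions with opposite edges parallel, so by induction on the distance from $v$ in $H$ the direction set at every vertex equals $S$. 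With that spelled out the argument is complete; everything else (the weight computation, the distance, and the injectivity of $H\mapsto(b(H),t(H))$) is routine and correctly done.
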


If $H$ is an induced subgraph of $\Gamma_n$, it is also an induced subgraph of $Q_n$. Thus Proposition~\ref{pro:bt} is still true for induced subgraphs of Fibonacci cubes.
The {\em support} of an induced hypercube $H$, $\supp(H)$, is the set of $k$ coordinates that vary in $H$. Therefore, 
$$\supp(H) = \{i\in [1,n]:\ t_i=1, b_i=0\}\,.$$ 
$H$ is thus characterized by the couple $(t,b)$ consisting of the top vertex and the bottom vertex of $H$.

If $u$ and $v$ are vertices of a graph $G$, the \emph{interval} $I_G(u,v)$ between $u$ and $v$ (in $G$) is the set of vertices lying on shortest $u,v$-path, that is, $I_G(u,v) = \{w | d(u,v) = d(u,w) + d(w,v)\}$. We will also write $I(u,v)$ when $G$ will be clear from the context. 
A subgraph $G$ of a graph $H$ is an \emph{isometric subgraph} 
if the distance between any vertices of $G$ equals the distance 
between the same vertices in $H$. 
Isometric subgraphs of hypercubes are called \emph{partial cubes}. 
The {\em dimension} of a partial cube $G$ is the smallest integer
$d$ such that $G$ is an isometric subgraph of $Q_d$. 
Many important classes of graphs are partial cubes, 
in particular trees, median graphs, benzenoid graphs, phenylenes, 
grid graphs and bipartite torus graphs. In addition, Fibonacci  
and Lucas cubes are partial cubes as well, see \cite{K-2005}.

For a graph $G$, let $c_k(G)$ $(k\ge 0)$ be the number of induced 
subgraphs of $G$ isomorphic to  $Q_k$. The {\em cube polynomial}, $C_G(x)$,
of $G$, is the corresponding enumerator polynomial, that is

\begin{equation}\label{eqn:defCG}
C_G(x) = \sum_{k\geq 0} c_k(G) x^k\,.
\end{equation}
This polynomial was introduced in~\cite{BKS-2003}, determined for Fibonacci and Lucas cubes in~\cite{KM-2012a} and afterwards for several of their variations~(see \cite[Chapter~9]{EKM-2023} for example). The expression of the cube polynomial of $\Gamma^p_n$ was conjectured in~\cite{WY-2022}, conjecture  proved in~\cite{M-2025b}.

Assume $0^n$ belongs to $G$ subgraph of $Q_n$. A bivariate refinement of $C_{G}(x)$ is the {\em distance cube polynomial} (with respect to $0^n$)~\cite{KM-2019a}, first introduced as {\em$q$-cube polynomial}{~\cite{SE-2017a,SE-2018a} in the case of graphs where it can be seen as a $q$-analogue of the cube polynomial. This polynomial keeps track of the distance of the hypercubes to $0^n$.  We define the polynomial the following way

\begin{equation}\label{eqn:defCGq}
D_G(x,q) = \sum_{k\geq 0} c_{k,d}(G) x^kq^d 
\end{equation} where $c_{k,d}(G)$ is  the number of induced 
subgraphs of $G$ isomorphic to $Q_k$ with bottom vertex at distance $d$ of $0^n$.

A {\em maximal hypercube}  of a graph $G$ is an induced subgraph $H$ isomorphic to a hypercube such that $H$ is not contained in a larger induced hypercube of $G$. For a given interconnection topology it is important to characterize maximal hypercubes, for example from the point of view of embeddings.

Let $h_k(G)$ be the number of maximal hypercubes of dimension $k$ of $G$ and $H_{G}(x)$ the corresponding 
enumerator polynomial, that is, 

$$H_{G}(x) = \sum_{k\geq 0} h_k(G) x^k\,.$$

This polynomial was determined for Fibonacci and Lucas cubes~\cite{M-2012a}, alternate Lucas-cubes~\cite{ESS-2021e} and Pell graphs~\cite{EKM-2023}.

If $G=(V(G),E(G))$ is a graph and $X\subseteq V(G)$, then $\langle X\rangle$ denotes the subgraph of $G$ induced by $X$.  
Let $\le$ be a partial order on $B^n$ defined with $u_1\ldots u_n \le v_1\ldots v_n$ if $u_i\le v_i$ holds for $i\in [1,n]$. For $X \subseteq B^n$ we define the {\em daisy cube generated by $X$} as the subgraph of $Q_n$  
$$\left\langle \{u\in B^n| u\le x\ {\rm for\ some}\ x\in X \} \right\rangle\,.$$
%For $X \subseteq B^n$ we define the graph  $Q_n(X)$ as the subgraph of $Q_n$ with  
%$$Q_n(X) = \left\langle \{u\in B^n| u\le x\ {\rm for\ some}\ x\in X \} \right\rangle$$
%and say that $Q_n(X)$ is the {\em daisy cube generated by $X$}.
 %Note that  if $\widehat{X}$ is the antichain consisting of the maximal elements of the poset $(X,\le)$, then $Q_n(\widehat{X}) = Q_n(X)$. As noticed in the daisy cube introductory paper \cite{KM-2019a} we can alternatively say that $$Q_n(X) = \left\langle \bigcup_{x\in X} I_{Q_n}(x,0^n)\right\rangle=\left\langle \bigcup_{x\in \widehat{X}} I_{Q_n}(x,0^n)\right\rangle.$$
 Finally we will say that a graph $G$ is {\em a daisy cube} if there exist an isometrical embedding  of $G$ in some hypercube $Q_n$ and a subset $X$ of $B^n$ such that $G$ is the daisy cube generated by $X$. Such an embedding will be called a {\em proper embedding}.

By construction daisy cubes are partial cubes. It is immediate that the class of daisy cubes is closed under the Cartesian product.

Fibonacci cubes, Lucas cubes, Alternate Lucas-cube~\cite{ESS-2021e} and  Pell graph~\cite{M-2019}\cite[Theorem 9.68 for a proof]{EKM-2023} are examples of daisy cubes.

%%%%%%%%%%%%%%%%%%%%%%%%%%%%%%%%%%%%%%%%%%%%%%%%%%%%%%%%%%%
%%%%%%%%%%%%%%%%%%%%%%%%%%%%%%%%%%%%%%%%%%%%%%%%%%%%%%%%%%%
\section{$p$-nomial coefficients and compositions }
\label{sec:pnomial}
%%%%%%%%%%%%%%%%%%%%%%%%%%%%%%%%%%%%%%%%%%%%%%%%%%%%%%%%%%
%%%%%%%%%%%%%%%%%%%%%%%%%%%%%%%%%%%%%%%%%%%%%%%%%%%%%%%%%%%
The concepts presented in this section will be used throughout this document. They are taken from the literature although their first occurrence is difficult to find. We will recall these definitions and results to fix the notations. Moreover, since proofs of the properties are easy, we will give them so that this article is standalone.

A classical way to generalize binomial coefficients is to consider polynomial coefficients~\cite{A-1974}, more precisely  $p$-nomial coefficients for a fixed integer $p\geq2$. The study of them begin with de~Moivre~\cite{D-1756} and Euler~\cite{E-1801}. The notations used by modern authors are not always the same; for our purposes it will be convenient to use the following one which is the most common. Note that Andrews\cite{A-1990} uses this notation for the centered $p$-nomial coefficients. 
\begin{definition}
 For any integers $p$, $a$ and $b$ with $b\geq 0$ and $p\geq 2$, the \emph{$p$-nomial coefficient $\binom{b}{a}_{p-1}$} is the coefficient of $x^a$ in $(1+x+x^2+\dots +x^{p-1})^b$.
\end{definition}
The 2-nomial coefficients are the binomial coefficients  $\binom{b}{a}_1=\binom{b}{a}$. It is also immediate that $\binom{b}{a}_{p-1}= 0 $ for $a>b(p-1)$ or $a< 0$, $\binom{b}{0}_{p-1}=\binom{b}{b(p-1)}_{p-1}=1$ and $\binom{b}{a}_{p-1}=\binom{b}{b(p-1)-a}_{p-1}$. Likewise many properties of binomial coefficients can be generalized to $p$-nomial coefficients, see~\cite{F-2015}. For example from the definition they satisfied for $b\geq1$ and any $a\in\ZZ$ the recurrence relation 
\begin{equation}\label{eqn:pascal}
\binom{b}{a}_{p-1}=\binom{b-1}{a}_{p-1}+\binom{b-1}{a-1}_{p-1} +\dots +\binom{b-1}{a-p+1}_{p-1}.
\end{equation}
 They thus can be obtained from $\binom{0}{0}_{p-1}=1$ and $\binom{0}{a}_{p-1}=0$, $a\neq0$, by a generalized Pascal triangle~\cite{E-1801, H-1969} (see Figure~\ref{fig:pascal}). The $p$-nomial coefficients can also be expressed using binomial coefficients, for example by the following formula derived from the negative-binomial theorem.
\begin{equation*}
\binom{b}{a}_{p-1}=\sum_{i=0}^{\left\lfloor \frac{a}{p}\right\rfloor}{(-1)^i}\binom{b}{i}\binom{b+a-1-i\,p}{b-1}\,.
\end{equation*} 

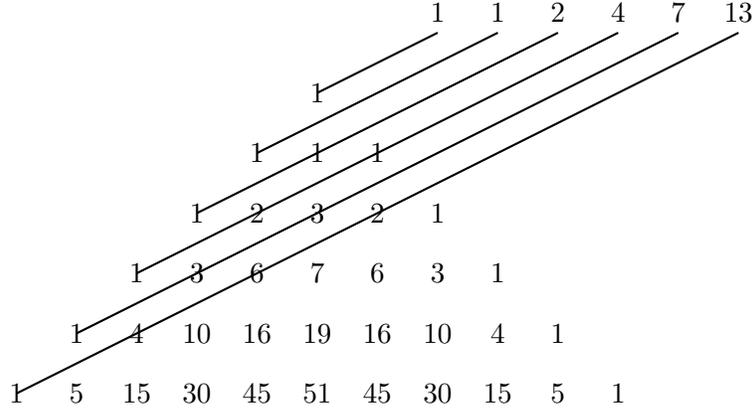
\begin{figure}[ht!]
\begin{center}
\begin{tikzpicture}[scale=1,style=thick]
\begin{scope}[yshift = 0cm, xshift =0cm]
\def\dh{0.8};\def\dv{-0.8};
\node at (0,0) {$1$};

\node  at (-\dh,\dv) {$1$};\node at (0,\dv) {$1$};\node at (\dh,\dv) {$1$};

\node at (-2*\dh,2*\dv)  {$1$};\node at (-\dh,2*\dv)  {$2$};\node at (0,2*\dv)  {$3$};
\node at (\dh,2*\dv)  {$2$};\node at (2*\dh,2*\dv)  {$1$};

\node at (-3*\dh,3*\dv)  {$1$};\node at (-2*\dh,3*\dv)  {$3$};\node at (-\dh,3*\dv)  {$6$};\node at (0,3*\dv)  {$7$};\node at (\dh,3*\dv)  {$6$};\node at (2*\dh,3*\dv)  {$3$};
\node at (3*\dh,3*\dv)  {$1$};

\node at (-4*\dh,4*\dv)  {$1$};\node at (-3*\dh,4*\dv)  {$4$};\node at (-2*\dh,4*\dv)  {$10$};\node at (-\dh,4*\dv)  {$16$};\node at (0,4*\dv)  {$19$};\node at (\dh,4*\dv)  {$16$};\node at (2*\dh,4*\dv)  {$10$};
\node at (3*\dh,4*\dv)  {$4$};\node at (4*\dh,4*\dv)  {$1$};

\node at (-5*\dh,5*\dv)  {$1$};
\node at (-4*\dh,5*\dv)  {$5$};\node at (-3*\dh,5*\dv)  {$15$};\node at (-2*\dh,5*\dv)  {$30$};\node at (-\dh,5*\dv)  {$45$};\node at (0,5*\dv)  {$51$};\node at (\dh,5*\dv)  {$45$};\node at (2*\dh,5*\dv)  {$30$};
\node at (3*\dh,5*\dv)  {$15$};\node at (4*\dh,5*\dv)  {$5$};
\node at (5*\dh,5*\dv)  {$1$};

\draw (-5*\dh,5*\dv) -- (7.0*\dh,-1*\dv);\draw (-4*\dh,4*\dv) -- (6*\dh,-1*\dv);
\draw (-3*\dh,3*\dv) -- (5*\dh,-1*\dv);\draw (-2*\dh,2*\dv) -- (4*\dh,-1*\dv);
\draw (-\dh,\dv) -- (3*\dh,-1*\dv);\draw (0,0) -- (2*\dh,-1*\dv);
\draw (7.0*\dh,-1*\dv) node[above]{$13$};\draw (6.0*\dh,-1*\dv) node[above]{$7$};
\draw (5.0*\dh,-1*\dv) node[above]{$4$};\draw (4.0*\dh,-1*\dv) node[above]{$2$};
\draw (3.0*\dh,-1*\dv) node[above]{$1$};\draw (2.0*\dh,-1*\dv) node[above]{$1$};

\end{scope}
\end{tikzpicture}
\end{center}
\caption{Generalized Pascal triangle of 3-nomial coefficients $\binom{b}{a}_2$ with third order Fibonacci numbers $F^{(3)}_n$ as sums of diagonal entries.\label{fig:pascal}} 
\end{figure}

A {\em composition} of a positive integer $n$ in $k$ parts is a 
way of writing $ n = a_1 + a_2 + \cdots + a_k$ where each summand, called part, is a positive 
integer and the order of the summands is taken into account.
 It is well known, and easy to prove by  stars and bars method, that the number  of compositions of $n\geq1$ in $k$ parts is $\binom{n-1}{k-1}$ and thus the total number of compositions on $n$ is $\sum_{k\geq 1}\binom{n-1}{k-1}= 2^{n-1}$. By convention there exists one composition of 0, a composition in 0 part.

Let $n,k$ with $n\geq k \geq 0$. The coefficient of $x^{n-k}$ in  $(1+x+x^2+\dots +x^{p-1})^{k}$ is the number of ways of writing $n-k$ as sum of $k$ integers $a_i \in \{0,1,2,\dots,p-1\}$.
Increasing each $a_i$ by $1$ we obtain compositions of $n$  with parts in $\{1,2,\dots,p\}$. The following result is thus immediate.
\begin{proposition}
 The number of compositions of  $n$ in $k$ parts belonging to $\{1,2,\dots,p\}$ is the  $p$-nomial coefficient $\binom{k}{n-k}_{p-1}$. 
\end{proposition}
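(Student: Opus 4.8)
The plan is to combine the definition of the $p$-nomial coefficient with the elementary ``shift by one'' bijection indicated just before the statement. First I would unwind the definition: $\binom{k}{n-k}_{p-1}$ is the coefficient of $x^{n-k}$ in $(1+x+x^2+\dots+x^{p-1})^k$. Expanding this $k$-fold product, a monomial $x^{n-k}$ arises exactly once for each choice of an exponent $a_i\in\{0,1,\dots,p-1\}$ from the $i$-th factor, $i\in[\![1,k]\!]$, subject to $a_1+a_2+\dots+a_k=n-k$. Hence $\binom{k}{n-k}_{p-1}$ equals the number of tuples $(a_1,\dots,a_k)\in\{0,1,\dots,p-1\}^k$ with $\sum_{i=1}^k a_i=n-k$.

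Next I would exhibit the map $\varphi\colon (a_1,\dots,a_k)\mapsto (a_1+1,\dots,a_k+1)$. Since $a_i\in\{0,\dots,p-1\}$ if and only if $a_i+1\in\{1,\dots,p\}$, and $\sum_{i=1}^k (a_i+1)=(n-k)+k=n$, the image $\varphi(a_1,\dots,a_k)$ is a composition of $n$ into $k$ parts each lying in $\{1,2,\dots,p\}$; conversely, subtracting $1$ from every part of such a composition returns a tuple in $\{0,\dots,p-1\}^k$ summing to $n-k$. Thus $\varphi$ is a bijection between the two sets, and the number of compositions of $n$ into $k$ parts from $\{1,\dots,p\}$ is $\binom{k}{n-k}_{p-1}$, as claimed.

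Finally I would dispatch the boundary cases contained in the hypothesis $n\ge k\ge 0$: when $k=0$ both sides equal $1$ (the empty composition of $n=0$, matching $\binom{0}{0}_{p-1}=1$), when $n=k$ both sides equal $1$ (the all-ones composition, matching $\binom{k}{0}_{p-1}=1$), and for $n<k$ there are no such compositions, consistent with $\binom{k}{n-k}_{p-1}=0$ when $n-k<0$. There is no genuine obstacle here; the only point requiring care is to state the correspondence between exponent tuples and compositions cleanly, since the whole argument is really just reading off a coefficient and translating a sum-to-$(n-k)$ condition into a sum-to-$n$ condition.
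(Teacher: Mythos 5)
Your argument is correct and is essentially identical to the paper's: the paper also reads off $\binom{k}{n-k}_{p-1}$ as the number of tuples $(a_1,\dots,a_k)$ with $a_i\in\{0,\dots,p-1\}$ summing to $n-k$ and then applies the same shift-by-one bijection to compositions of $n$ with parts in $\{1,\dots,p\}$. Your explicit treatment of the boundary cases is a harmless addition the paper leaves implicit.
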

It is also easy to determine the total number of this kind of compositions.
\begin{proposition}
 The total number of compositions of  $n$ in  parts belonging to $\{1,2,\dots,p\}$ is $F_{n+p-1}^{(p)}$. 
\end{proposition}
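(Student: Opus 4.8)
The plan is to establish the recurrence $a_n = \sum_{j=1}^{p} a_{n-j}$ for $a_n := $ (total number of compositions of $n$ with parts in $\{1,2,\dots,p\}$), identify the initial conditions, and then match this against the $p$-th order generalized Fibonacci numbers $F^{(p)}_n$ via a shift of index. The combinatorial recurrence comes from conditioning on the \emph{last} part of the composition: if a composition of $n$ (with $n\geq 1$) has last part equal to $j\in\{1,\dots,p\}$, then removing that last part leaves a composition of $n-j$ with parts in $\{1,\dots,p\}$, and this correspondence is a bijection. Hence $a_n = \sum_{j=1}^{p} a_{n-j}$ for $n\geq 1$, where we adopt the convention $a_0 = 1$ (the empty composition, as stated in the excerpt) and $a_m = 0$ for $m<0$.

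Next I would pin down small cases to align the indices with the definition $F^{(p)}_m = 0$ for $m\in[\![0,p-2]\!]$, $F^{(p)}_{p-1}=1$, and $F^{(p)}_m = \sum_{i=1}^p F^{(p)}_{m-i}$ for $m\geq p$. From $a_0=1$ and the recurrence one computes $a_1 = a_0 = 1$, $a_2 = a_1+a_0 = 2$, and in general $a_n = 2^{n-1}$ for $1\leq n\leq p$ (the recurrence has not yet ``saturated''), matching the known fact that $F^{(p)}_m = 2^{m-p}$ for $m\in[\![p,2p-1]\!]$ once we set $m = n+p-1$. So the claim is that $a_n = F^{(p)}_{n+p-1}$. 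To finish, I would check the base cases: $a_0 = 1 = F^{(p)}_{p-1}$, and for $n<0$ we'd want $F^{(p)}_{n+p-1}=0$, which holds for $n+p-1\in[\![0,p-2]\!]$, i.e. $n\in[\![1-p,-1]\!]$ — this covers exactly the negative indices that appear in the depth-$p$ recurrence, so the convention is consistent. Then a straightforward induction on $n$: assuming $a_m = F^{(p)}_{m+p-1}$ for all $m<n$, we get $a_n = \sum_{j=1}^p a_{n-j} = \sum_{j=1}^p F^{(p)}_{n-j+p-1} = F^{(p)}_{n+p-1}$ by the defining recurrence of $F^{(p)}$ (valid since $n+p-1\geq p$ when $n\geq 1$).

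An equally clean alternative, which I might present instead or in parallel, is the \textbf{generating function} route: summing the previous Proposition over $k$ gives $\sum_{n\geq 0} a_n t^n = \sum_{k\geq 0} t^k(1+t+\dots+t^{p-1})^k = \frac{1}{1 - t(1+t+\dots+t^{p-1})} = \frac{1-t}{1 - 2t + t^{p+1}}$; massaging the denominator, $1 - t(1+t+\dots+t^{p-1}) = 1 - t - t^2 - \dots - t^p$, so $\sum_{n\geq 0} a_n t^n = \frac{1}{1-t-t^2-\dots-t^p}$, and comparing with \eqref{eq:gen(p)} which gives $\sum_m F^{(p)}_m t^m = \frac{t^{p-1}}{1-t-\dots-t^p}$, we read off $a_n = F^{(p)}_{n+p-1}$ directly.

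The main obstacle — really the only subtlety — is bookkeeping the index shift and the boundary conventions: making sure the ``empty composition of $0$'' convention is compatible with the stated initial values of $F^{(p)}$, and that the negative-index terms the recurrence implicitly invokes vanish on both sides. Everything else is routine. I would lead with the last-part bijection and the induction, since it is the most transparent, and mention the generating-function computation as a remark.
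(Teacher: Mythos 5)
Your main argument is correct and essentially the same as the paper's: the paper also derives the recurrence $C_n=C_{n-1}+\cdots+C_{n-p}$ by conditioning on a part (the first rather than the last) and concludes by induction, anchoring the base cases at $n\le p$ via the count $2^{n-1}$ instead of your negative-index conventions. Your generating-function remark is a valid alternative not present in the paper, but the core proof matches.
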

\begin{proof}
Let $C_n$ be this number.
For $n\leq p $ we have $C_n=F_{n+p-1}^{(p)}$ since the number of compositions of $n$ is $2^{n-1}= F_{n+p-1}^{(p)}$ and all parts are at most $n\leq p$. For $n> p$, considering the possible values of the first part, $C_n= C_{n-1}+C_{n-2}+\dots+ C_{n-p}$ thus by induction $C_n=F_{n+p-1}^{(p)}$.
\end{proof}\qed

%%%%%%%%%%%%%%%%%%%%%%%%%%%%%%%%%%%%%%%%%%%%%%%%%%%%%%%%%%%
%%%%%%%%%%%%%%%%%%%%%%%%%%%%%%%%%%%%%%%%%%%%%%%%%%%%%%%%%%%
\section{Vertices of $p$-th order Fibonacci cubes }
\label{sec:enumprop}
%%%%%%%%%%%%%%%%%%%%%%%%%%%%%%%%%%%%%%%%%%%%%%%%%%%%%%%%%%
%%%%%%%%%%%%%%%%%%%%%%%%%%%%%%%%%%%%%%%%%%%%%%%%%%%%%%%%%%%

\begin{figure}[ht!]
\begin{center}
\begin{tikzpicture}[scale=0.5,style=thick]
\tikzstyle{every node}=[draw=none,fill=none]
\def\vr{3pt} % \vr = vertex radius;  Set \vr = 2/scale for uniform sizing of vertices
%Q4$
\begin{scope}[yshift = 0cm, xshift = 0cm]

%% vertices defined %%
\begin{scope}[yshift = 0cm, xshift = 0cm]
\path (0,0) coordinate (v000);
\path (2.5,0) coordinate (v100);
\path (1.5,1.5) coordinate (v010);
\path (4,1.5) coordinate (v110);
\path (0,2.5) coordinate (v001);
\path (2.5,2.5) coordinate (v101);
\path (1.5,4) coordinate (v011);
\path (4,4) coordinate (v111);
%% edges %%
\draw (v000) -- (v100) -- (v110) -- (v010) -- (v000) -- (v001) -- (v011) -- (v111) -- (v101) -- (v100);
\draw (v001) -- (v101); 
\draw (v111) -- (v110); 
\draw (v011) -- (v010); 
\draw (v000) -- (5,-1);
\draw (v100) -- (7.5,-1);
\draw (v010) -- (6.5,0.5);
\draw(v110) -- (9,0.5);
\draw (v001) -- (5,1.5);
\draw (v101) -- (7.5,1.5);
\draw (v011) -- (6.5,3);
\draw (v111) -- (9,3);
% 		\draw (x2) .. controls (2,2) and (2.5,1.5) .. (v2);
%% vertices %%%
\draw (v000)  [fill=white] circle (\vr);
\draw (v100)  [fill=white] circle (\vr);
\draw (v010)  [fill=white] circle (\vr);
\draw (v001)  [fill=white] circle (\vr);
\draw (v110)  [fill=white] circle (\vr);
\draw (v101)  [fill=white] circle (\vr);
\draw (v011)  [fill=white] circle (\vr);
\draw (v111)  [fill=white] circle (\vr);
\draw[left] (v000)++(0.2,0.27) node {\footnotesize $0000$};
\draw[left] (v100)++(0.2,0.27) node {\footnotesize $0100$};
\draw[left] (v010)++(0.2,0.27) node {\footnotesize $0010$};
\draw[left] (v001)++(0.2,0.27) node {\footnotesize $0001$};

\draw[left] (v110)++(0.2,0.27) node {\footnotesize $0110$};
\draw[left] (v101)++(0.2,0.27) node {\footnotesize $0101$};
\draw[left] (v011)++(0.2,0.27) node {\footnotesize $0011$};
\draw[left] (v111)++(0.2,0.27) node {\footnotesize $0111$};

\end{scope}

\begin{scope}[yshift = -1cm, xshift = 5cm]
%% vertices defined %%
\path (0,0) coordinate (v000);
\path (2.5,0) coordinate (v100);
\path (1.5,1.5) coordinate (v010);
\path (4,1.5) coordinate (v110);
\path (0,2.5) coordinate (v001);
\path (2.5,2.5) coordinate (v101);
\path (1.5,4) coordinate (v011);
\path (4,4) coordinate (v111);
%% edges %%
\draw (v000) -- (v100) -- (v110) -- (v010) -- (v000) -- (v001) -- (v011) -- (v111) -- (v101) -- (v100);
\draw (v001) -- (v101); 
\draw (v111) -- (v110); 
\draw (v011) -- (v010); 
% 		\draw (x2) .. controls (2,2) and (2.5,1.5) .. (v2);
%% vertices %%%
\draw (v000)  [fill=white] circle (\vr);
\draw (v100)  [fill=white] circle (\vr);
\draw (v010)  [fill=white] circle (\vr);
\draw (v001)  [fill=white] circle (\vr);
\draw (v110)  [fill=white] circle (\vr);
\draw (v101)  [fill=white] circle (\vr);
\draw (v011)  [fill=white] circle (\vr);
\draw (v111)  [fill=white] circle (\vr);
\draw[right] (v000)++(-0.2,-0.27) node {\footnotesize $1000$};
\draw[right] (v100)++(-0.2,-0.27) node {\footnotesize $1100$};
\draw[right] (v010)++(-0.2,-0.27) node {\footnotesize $1010$};
\draw[right] (v001)++(-0.2,-0.27) node {\footnotesize $1001$};

\draw[right] (v110)++(-0.2,-0.27) node {\footnotesize $1110$};
\draw[right] (v101)++(-0.2,-0.27) node {\footnotesize $1101$};
\draw[right] (v011)++(-0.2,-0.27) node {\footnotesize $1011$};
\draw[right] (v111)++(-0.2,-0.27) node {\footnotesize $1111$};

\end{scope}
\end{scope}
%Q4(1111)
\begin{scope}[yshift = 0cm, xshift = -15cm]
%% vertices defined %%
\begin{scope}[yshift = 0cm, xshift = 0cm]
\path (0,0) coordinate (v000);
\path (2.5,0) coordinate (v100);
\path (1.5,1.5) coordinate (v010);
\path (4,1.5) coordinate (v110);
\path (0,2.5) coordinate (v001);
\path (2.5,2.5) coordinate (v101);
\path (1.5,4) coordinate (v011);
\path (4,4) coordinate (v111);
%% edges %%
\draw (v000) -- (v100) -- (v110) -- (v010) -- (v000) -- (v001) -- (v011) -- (v111) -- (v101) -- (v100);
\draw (v001) -- (v101); 
\draw (v111) -- (v110); 
\draw (v011) -- (v010); 
\draw (v000) -- (5,-1);
\draw (v100) -- (7.5,-1);
\draw (v010) -- (6.5,0.5);
\draw(v110) -- (9,0.5);
\draw (v001) -- (5,1.5);
\draw (v101) -- (7.5,1.5);
\draw (v011) -- (6.5,3);
%\draw (v111) -- (9,3);
% 		\draw (x2) .. controls (2,2) and (2.5,1.5) .. (v2);
%% vertices %%%
\draw (v000)  [fill=white] circle (\vr);
\draw (v100)  [fill=white] circle (\vr);
\draw (v010)  [fill=white] circle (\vr);
\draw (v001)  [fill=white] circle (\vr);
\draw (v110)  [fill=white] circle (\vr);
\draw (v101)  [fill=white] circle (\vr);
\draw (v011)  [fill=white] circle (\vr);
\draw (v111)  [fill=white] circle (\vr);
\draw[left] (v000)++(0.2,0.27) node {\footnotesize $0000$};
\draw[left] (v100)++(0.2,0.27) node {\footnotesize $0100$};
\draw[left] (v010)++(0.2,0.27) node {\footnotesize $0010$};
\draw[left] (v001)++(0.2,0.27) node {\footnotesize $0001$};

\draw[left] (v110)++(0.2,0.27) node {\footnotesize $0110$};
\draw[left] (v101)++(0.2,0.27) node {\footnotesize $0101$};
\draw[left] (v011)++(0.2,0.27) node {\footnotesize $0011$};
\draw[left] (v111)++(0.2,0.27) node {\footnotesize $0111$};

\end{scope}

\begin{scope}[yshift = -1cm, xshift = 5cm]
%% vertices defined %%
\path (0,0) coordinate (v000);
\path (2.5,0) coordinate (v100);
\path (1.5,1.5) coordinate (v010);
\path (4,1.5) coordinate (v110);
\path (0,2.5) coordinate (v001);
\path (2.5,2.5) coordinate (v101);
\path (1.5,4) coordinate (v011);
%\path (4,4) coordinate (v111);
%% edges %%
\draw (v000) -- (v100) -- (v110) -- (v010) -- (v000) -- (v001) -- (v011); draw (v101) -- (v100);
\draw (v001) -- (v101); \draw (v101) -- (v100); 

\draw (v011) -- (v010); 
% 		\draw (x2) .. controls (2,2) and (2.5,1.5) .. (v2);
%% vertices %%%
\draw (v000)  [fill=white] circle (\vr);
\draw (v100)  [fill=white] circle (\vr);
\draw (v010)  [fill=white] circle (\vr);
\draw (v001)  [fill=white] circle (\vr);
\draw (v110)  [fill=white] circle (\vr);
\draw (v101)  [fill=white] circle (\vr);
\draw (v011)  [fill=white] circle (\vr);
%\draw (v111)  [fill=white] circle (\vr);
\draw[right] (v000)++(-0.2,-0.27) node {\footnotesize $1000$};
\draw[right] (v100)++(-0.2,-0.27) node {\footnotesize $1100$};
\draw[right] (v010)++(-0.2,-0.27) node {\footnotesize $1010$};
\draw[right] (v001)++(-0.2,-0.27) node {\footnotesize $1001$};

\draw[right] (v110)++(-0.2,-0.27) node {\footnotesize $1110$};
\draw[right] (v101)++(-0.2,-0.27) node {\footnotesize $1101$};
\draw[right] (v011)++(-0.2,-0.27) node {\footnotesize $1011$};
%\draw[right] (v111)++(-0.2,-0.27) node {\footnotesize $1111$};

\end{scope}
\end{scope}

%Q4(111)
\begin{scope}[yshift = 10cm, xshift = 0cm]
%% vertices defined %%
\begin{scope}[yshift = 0cm, xshift = 0cm]
\path (0,0) coordinate (v000);
\path (2.5,0) coordinate (v100);
\path (1.5,1.5) coordinate (v010);
\path (4,1.5) coordinate (v110);
\path (0,2.5) coordinate (v001);
\path (2.5,2.5) coordinate (v101);
\path (1.5,4) coordinate (v011);
%\path (4,4) coordinate (v111);
%% edges %%
\draw (v000) -- (v100) -- (v110) -- (v010) -- (v000) -- (v001) -- (v011); \draw (v101) -- (v100);
\draw (v001) -- (v101); 
%\draw (v111) -- (v110); 
\draw (v011) -- (v010); 
\draw (v000) -- (5,-1);
\draw (v100) -- (7.5,-1);
\draw (v010) -- (6.5,0.5);
%\draw(v110) -- (9,0.5);
\draw (v001) -- (5,1.5);
\draw (v101) -- (7.5,1.5);
\draw (v011) -- (6.5,3);
%\draw (v111) -- (9,3);
% 		\draw (x2) .. controls (2,2) and (2.5,1.5) .. (v2);
%% vertices %%%
\draw (v000)  [fill=white] circle (\vr);
\draw (v100)  [fill=white] circle (\vr);
\draw (v010)  [fill=white] circle (\vr);
\draw (v001)  [fill=white] circle (\vr);
\draw (v110)  [fill=white] circle (\vr);
\draw (v101)  [fill=white] circle (\vr);
\draw (v011)  [fill=white] circle (\vr);
%\draw (v111)  [fill=white] circle (\vr);
\draw[left] (v000)++(0.2,0.27) node {\footnotesize $0000$};
\draw[left] (v100)++(0.2,0.27) node {\footnotesize $0100$};
\draw[left] (v010)++(0.2,0.27) node {\footnotesize $0010$};
\draw[left] (v001)++(0.2,0.27) node {\footnotesize $0001$};

\draw[left] (v110)++(0.2,0.27) node {\footnotesize $0110$};
\draw[left] (v101)++(0.2,0.27) node {\footnotesize $0101$};
\draw[left] (v011)++(0.2,0.27) node {\footnotesize $0011$};
%\draw[left] (v111)++(0.2,0.27) node {\footnotesize $0111$};

\end{scope}

\begin{scope}[yshift = -1cm, xshift = 5cm]
%% vertices defined %%
\path (0,0) coordinate (v000);
\path (2.5,0) coordinate (v100);
\path (1.5,1.5) coordinate (v010);
%\path (4,1.5) coordinate (v110);
\path (0,2.5) coordinate (v001);
\path (2.5,2.5) coordinate (v101);
\path (1.5,4) coordinate (v011);
%\path (4,4) coordinate (v111);
%% edges %%
\draw (v000) -- (v100); \draw (v010) -- (v000) -- (v001) -- (v011); \draw (v101) -- (v100);
\draw (v001) -- (v101); \draw (v101) -- (v100); 

\draw (v011) -- (v010); 
% 		\draw (x2) .. controls (2,2) and (2.5,1.5) .. (v2);
%% vertices %%%
\draw (v000)  [fill=white] circle (\vr);
\draw (v100)  [fill=white] circle (\vr);
\draw (v010)  [fill=white] circle (\vr);
\draw (v001)  [fill=white] circle (\vr);
%\draw (v110)  [fill=white] circle (\vr);
\draw (v101)  [fill=white] circle (\vr);
\draw (v011)  [fill=white] circle (\vr);
%\draw (v111)  [fill=white] circle (\vr);
\draw[right] (v000)++(-0.2,-0.27) node {\footnotesize $1000$};
\draw[right] (v100)++(-0.2,-0.27) node {\footnotesize $1100$};
\draw[right] (v010)++(-0.2,-0.27) node {\footnotesize $1010$};
\draw[right] (v001)++(-0.2,-0.27) node {\footnotesize $1001$};

%\draw[right] (v110)++(-0.2,-0.27) node {\footnotesize $1110$};
\draw[right] (v101)++(-0.2,-0.27) node {\footnotesize $1101$};
\draw[right] (v011)++(-0.2,-0.27) node {\footnotesize $1011$};
%\draw[right] (v111)++(-0.2,-0.27) node {\footnotesize $1111$};

\end{scope}
\end{scope}

%Q4(11)
\begin{scope}[yshift = 10cm, xshift = -15cm]
%% vertices defined %%
\begin{scope}[yshift = 0cm, xshift = 0cm]
\path (0,0) coordinate (v000);
\path (2.5,0) coordinate (v100);
\path (1.5,1.5) coordinate (v010);
%\path (4,1.5) coordinate (v110);
\path (0,2.5) coordinate (v001);
\path (2.5,2.5) coordinate (v101);
%\path (1.5,4) coordinate (v011);
%\path (4,4) coordinate (v111);
%% edges %%
\draw (v000) -- (v100); \draw(v010) -- (v000) -- (v001); \draw(v101) -- (v100);
\draw (v001) -- (v101); 
%\draw (v111) -- (v110); 
%\draw (v011) -- (v010); 
\draw (v000) -- (5,-1);
%\draw (v100) -- (7.5,-1);
\draw (v010) -- (6.5,0.5);
%\draw(v110) -- (9,0.5);
\draw (v001) -- (5,1.5);
%\draw (v101) -- (7.5,1.5);
%\draw (v011) -- (6.5,3);
%\draw (v111) -- (9,3);
% 		\draw (x2) .. controls (2,2) and (2.5,1.5) .. (v2);
%% vertices %%%
\draw (v000)  [fill=white] circle (\vr);
\draw (v100)  [fill=white] circle (\vr);
\draw (v010)  [fill=white] circle (\vr);
\draw (v001)  [fill=white] circle (\vr);
%\draw (v110)  [fill=white] circle (\vr);
\draw (v101)  [fill=white] circle (\vr);
%\draw (v011)  [fill=white] circle (\vr);
%\draw (v111)  [fill=white] circle (\vr);
\draw[left] (v000)++(0.2,0.27) node {\footnotesize $0000$};
\draw[left] (v100)++(0.2,0.27) node {\footnotesize $0100$};
\draw[left] (v010)++(0.2,0.27) node {\footnotesize $0010$};
\draw[left] (v001)++(0.2,0.27) node {\footnotesize $0001$};

%\draw[left] (v110)++(0.2,0.27) node {\footnotesize $0110$};
\draw[left] (v101)++(0.2,0.27) node {\footnotesize $0101$};
%\draw[left] (v011)++(0.2,0.27) node {\footnotesize $0011$};
%\draw[left] (v111)++(0.2,0.27) node {\footnotesize $0111$};

\end{scope}

\begin{scope}[yshift = -1cm, xshift = 5cm]
%% vertices defined %%
\path (0,0) coordinate (v000);
%\path (2.5,0) coordinate (v100);
\path (1.5,1.5) coordinate (v010);
%\path (4,1.5) coordinate (v110);
\path (0,2.5) coordinate (v001);
%\path (2.5,2.5) coordinate (v101);
%\path (1.5,4) coordinate (v011);
%\path (4,4) coordinate (v111);
%% edges %%
 \draw (v010) -- (v000) -- (v001); 
%\draw (v001) -- (v101); \draw (v101) -- (v100); 
%
%\draw (v011) -- (v010); 
% 		\draw (x2) .. controls (2,2) and (2.5,1.5) .. (v2);
%% vertices %%%
\draw (v000)  [fill=white] circle (\vr);
%\draw (v100)  [fill=white] circle (\vr);
\draw (v010)  [fill=white] circle (\vr);
\draw (v001)  [fill=white] circle (\vr);
%\draw (v110)  [fill=white] circle (\vr);
%\draw (v101)  [fill=white] circle (\vr);
%\draw (v011)  [fill=white] circle (\vr);
%\draw (v111)  [fill=white] circle (\vr);
\draw[right] (v000)++(-0.2,-0.27) node {\footnotesize $1000$};
%\draw[right] (v100)++(-0.2,-0.27) node {\footnotesize $1100$};
\draw[right] (v010)++(-0.2,-0.27) node {\footnotesize $1010$};
\draw[right] (v001)++(-0.2,-0.27) node {\footnotesize $1001$};

%\draw[right] (v110)++(-0.2,-0.27) node {\footnotesize $1110$};
%\draw[right] (v101)++(-0.2,-0.27) node {\footnotesize $1101$};
%\draw[right] (v011)++(-0.2,-0.27) node {\footnotesize $1011$};
%\draw[right] (v111)++(-0.2,-0.27) node {\footnotesize $1111$};

\end{scope}
\end{scope}

\end{tikzpicture}
\end{center}
\caption{$\Gamma_4^{(2)}=\Gamma_4$, $\Gamma_4^{(3)}$, $\Gamma_4^{(4)}$ and $Q_4$  }
\label{fig:Gamma 4}
\end{figure}
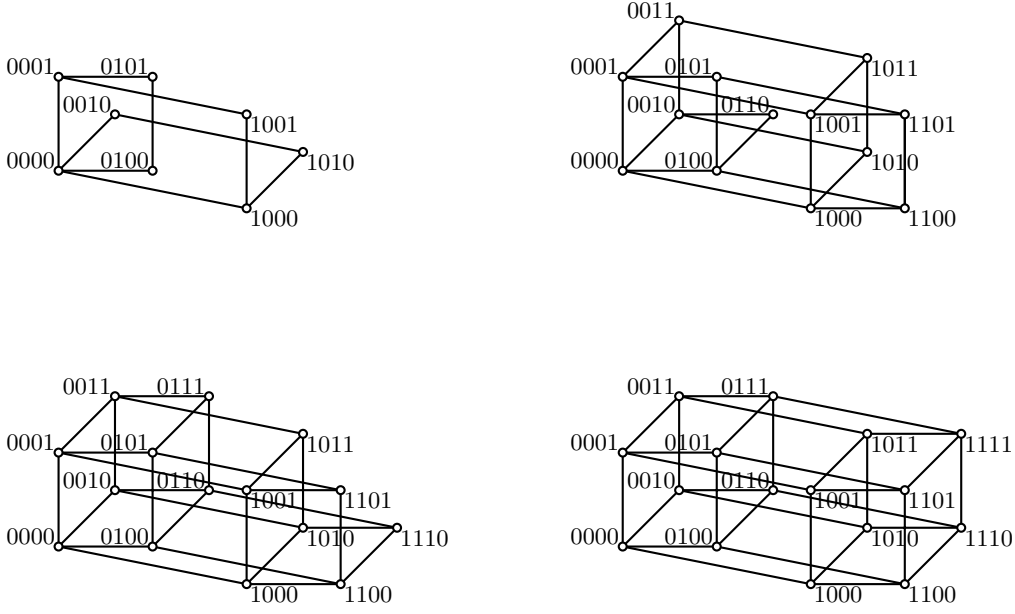

We will generalize the well-known properties  that the order of $\Gamma_n$ is $F_{n+2}$ and that the number of vertices with weight $w$ is $\binom{n-w+1}{w}$. For this purpose we will construct a bijection between $p$-th order Fibonacci strings of length $n$ and weight $w$ 
and compositions of $n+1$ in $n+1-w$ parts belonging  to $\{1,2,\dots,p\}$.

\begin{proposition}
 The number of vertices of $\Gamma_n^{(p)}$ is $F_{n+p}^{(p)}$. 
\end{proposition}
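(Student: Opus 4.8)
The plan is to set up an explicit bijection between ${\cal F}^{(p)}_n$ and the set of compositions of $n+1$ with parts in $\{1,2,\dots,p\}$, and then quote the enumeration of such compositions established in Section~\ref{sec:pnomial}: there are $F^{(p)}_{m+p-1}$ compositions of $m$ with parts in $\{1,\dots,p\}$, so applying this with $m=n+1$ yields exactly $F^{(p)}_{n+p}$. Since $|V(\Gamma^{(p)}_n)|=|{\cal F}^{(p)}_n|$, this gives the claim.

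For the bijection: a $p$-th order Fibonacci string has at most $p-1$ consecutive $1$s, so if $u\in{\cal F}^{(p)}_n$ has $z$ zeros, then $u$ is \emph{uniquely} of the form $u=1^{b_0}\,0\,1^{b_1}\,0\cdots0\,1^{b_z}$ with $b_0,\dots,b_z\in\{0,1,\dots,p-1\}$, the $b_i$ being the lengths of the (possibly empty) maximal runs of $1$s delimited by the $z$ zeros; the degenerate case $z=0$ reads $u=1^{b_0}$ with $b_0=n\le p-1$, and includes $u=\lambda$ when $n=0$. From $z+\sum_{i=0}^{z}b_i=n$, the tuple $(b_0+1,\dots,b_z+1)$ is a composition of $(n-z)+(z+1)=n+1$ into $z+1\ge1$ parts, each in $\{1,\dots,p\}$, and I take this to be the image of $u$. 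The inverse map sends a composition $(a_1,\dots,a_k)$ of $n+1$ with parts in $\{1,\dots,p\}$ to the string $1^{a_1-1}\,0\,1^{a_2-1}\,0\cdots0\,1^{a_k-1}$, which has $k-1$ zeros, $\sum_i(a_i-1)=(n+1)-k$ ones, hence length $n$, and contains no $1^p$ because each $a_i-1\le p-1$. The two maps are visibly mutually inverse, so we have a bijection; hence $|{\cal F}^{(p)}_n|$ equals the total number of compositions of $n+1$ with parts in $\{1,\dots,p\}$, which is $F^{(p)}_{(n+1)+p-1}=F^{(p)}_{n+p}$ by the last Proposition of Section~\ref{sec:pnomial}.

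I do not expect a genuine obstacle: the only points needing care are the uniqueness of the run decomposition of $u$ (in particular the degenerate cases $z=0$ and $u=\lambda$) and the length bookkeeping $n=z+\sum b_i\leftrightarrow n+1=\sum(b_i+1)$. As a sanity check, for $p=2$ this recovers the classical identity $|V(\Gamma_n)|=F_{n+2}$ via compositions of $n+1$ into $1$s and $2$s. An alternative would be a direct induction splitting ${\cal F}^{(p)}_n$ by the length $j\in\{0,\dots,p-1\}$ of the leading block $1^{j}0$ (plus the single string $1^n$ in the base cases $n\le p-1$), which matches the recursion $F^{(p)}_{n+p}=F^{(p)}_{n+p-1}+\dots+F^{(p)}_{n}$; but the bijective argument is shorter and reuses Section~\ref{sec:pnomial} directly, so I would present that one.
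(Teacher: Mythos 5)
Your proof is correct, but it takes a genuinely different route from the paper's own proof of this proposition. The paper argues by induction on $n$: for $n\le p-1$ one has $\Gamma^{(p)}_n=Q_n$ and $F^{(p)}_{n+p}=2^n$ since $n+p\in[\![p,2p-1]\!]$, while for $n\ge p$ the partition ${\cal F}^{(p)}_n=\bigcup_{i=0}^{p-1}1^i0\,{\cal F}^{(p)}_{n-i-1}$ by the leading block yields $|{\cal F}^{(p)}_n|=\sum_{i=1}^{p}|{\cal F}^{(p)}_{n-i}|$, which is exactly the defining recursion of $F^{(p)}_{n+p}$ --- this is precisely the inductive alternative you sketch in your last sentence. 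Your main argument instead builds the bijection with compositions of $n+1$ into parts from $\{1,\dots,p\}$ (maximal runs of $1$s delimited by the zeros, each run length increased by one) and then quotes the composition-counting proposition of Section~\ref{sec:pnomial}. That bijection is sound, the degenerate cases $z=0$ and $u=\lambda$ are handled, and it is in fact the very map the paper deploys later in the proof of Theorem~\ref{th:weight} to obtain the weight-refined count $\binom{n-w+1}{w}_{p-1}$; so your route buys the weight distribution essentially for free, at the cost of front-loading a construction the paper postpones. The two approaches are not logically far apart, since the composition-counting proposition you invoke is itself proved by the same first-part induction, but as a self-contained proof of this proposition yours is complete and valid.
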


\begin{proof} 
This is true for $0\leq n\leq p-1$ since $\Gamma_n^{(p)}=Q_n$ and $F_{n+p}^{(p)}=2^n$ because $n+p\in[[p,2p-1]]$.
Now assume $n\geq p$. A string in  ${\cal F}^{(p)}_n$ can start with a $0$ or with $1^i0$ where $1\leq i \leq p-1$. Therefore we have the partition ${\cal F}^{(p)}_n=\bigcup_{i=0}^{p-1}{1^i0{\cal F}^{(p)}_{n-i-1}}$ and thus $|{\cal F}^{(p)}_n|=\sum_{i=1}^p{|{\cal F}^{(p)}_{n-i}|}$. By induction the result follows.
\end{proof}\qed

From the expression (\ref{eq:gen(p)}) of the generating function of the sequence $(F_{n}^{(p)})_{n\geq 0}$ we obtain
\begin{corollary}
 The generating function of the order of $\Gamma_n^{(p)}$ is 
\begin{equation*}
\sum_{n\geq 0}{|V(\Gamma^{(p)}_{n}})|t^n=t^{-1}\left(\frac{1}{1-t-t^2-\dots-t^{p}}-1\right)=\frac{1+t+\dots+t^{p-1}}{1-t-t^2-\dots-t^{p}}.
\end{equation*}
\end{corollary}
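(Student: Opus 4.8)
The statement is an immediate consequence of the preceding Proposition together with the generating function (\ref{eq:gen(p)}) for $(F^{(p)}_n)_{n\ge 0}$, so the proof is a short index shift followed by a partial-fraction-style simplification. First I would record that, by the Proposition, $|V(\Gamma^{(p)}_n)| = F^{(p)}_{n+p}$ for every $n\ge 0$, so that the left-hand side equals $\sum_{n\ge 0} F^{(p)}_{n+p}\, t^n$. The task is therefore to express this shifted generating function in terms of the known one $G(t) := \sum_{n\ge 0} F^{(p)}_n t^n = \dfrac{t^{p-1}}{1-t-t^2-\cdots-t^p}$.

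Next I would perform the index shift carefully, keeping track of the low-order terms. Since $F^{(p)}_n = 0$ for $n \in [\![0,p-2]\!]$ and $F^{(p)}_{p-1}=1$, the truncation $\sum_{n=0}^{p-1} F^{(p)}_n t^n$ is just $t^{p-1}$. Hence
\begin{equation*}
\sum_{n\ge 0} F^{(p)}_{n+p}\, t^n = t^{-p}\Bigl(G(t) - t^{p-1}\Bigr) = t^{-p}\left(\frac{t^{p-1}}{1-t-t^2-\cdots-t^p} - t^{p-1}\right) = t^{-1}\left(\frac{1}{1-t-t^2-\cdots-t^p} - 1\right),
\end{equation*}
which is exactly the first claimed form.

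Finally I would simplify to the second form. Writing $\dfrac{1}{1-t-\cdots-t^p} - 1 = \dfrac{t+t^2+\cdots+t^p}{1-t-\cdots-t^p} = \dfrac{t(1+t+\cdots+t^{p-1})}{1-t-\cdots-t^p}$ and cancelling the factor $t^{-1}$ gives $\dfrac{1+t+\cdots+t^{p-1}}{1-t-t^2-\cdots-t^p}$, as asserted. There is no real obstacle here; the only point requiring a moment's care is the bookkeeping of the first $p$ coefficients of $G(t)$ when shifting the index, which is why I would isolate the observation $\sum_{n=0}^{p-1} F^{(p)}_n t^n = t^{p-1}$ explicitly before manipulating the series.
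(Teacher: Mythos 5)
Your proof is correct and follows exactly the route the paper intends: the paper derives the corollary directly from the proposition $|V(\Gamma^{(p)}_n)|=F^{(p)}_{n+p}$ and the generating function (\ref{eq:gen(p)}), leaving the index shift implicit. Your explicit bookkeeping of the low-order terms $\sum_{n=0}^{p-1}F^{(p)}_n t^n=t^{p-1}$ and the subsequent simplification are precisely the omitted computation.
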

 \begin{theorem} \label{th:weight}
 The number of vertices of $\Gamma_n^{(p)}$ of weight $w$ is $\binom{n-w+1}{w}_{p-1}$. 
\end{theorem}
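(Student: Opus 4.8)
The plan is to realise the bijection announced just before the statement: between the vertices of $\Gamma_n^{(p)}$ of weight $w$ — that is, the strings of ${\cal F}^{(p)}_n$ having exactly $w$ ones — and the compositions of $n+1$ into $n+1-w$ parts all belonging to $\{1,2,\dots,p\}$. Granting this, the theorem is immediate from the Proposition of the previous section counting compositions of an integer into a prescribed number of parts bounded by $p$: applying it with the role of the integer played by $n+1$ and the number of parts equal to $n+1-w$ gives $\binom{n+1-w}{(n+1)-(n+1-w)}_{p-1}=\binom{n-w+1}{w}_{p-1}$, as claimed.

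To build the map, start from $u=u_1\cdots u_n\in{\cal F}^{(p)}_n$ with $w(u)=w$ and append a $0$, obtaining $v=u_1\cdots u_n 0$, a string of length $n+1$ with $n+1-w$ zeros. Let $i_1<i_2<\cdots<i_{n+1-w}=n+1$ be the positions of the zeros in $v$, and set $a_1=i_1$ and $a_j=i_j-i_{j-1}$ for $2\le j\le n+1-w$. Then $a_1+\cdots+a_{n+1-w}=i_{n+1-w}=n+1$, each $a_j\ge 1$, and $a_j-1$ is exactly the length of the block of consecutive $1$s immediately preceding the $j$-th zero of $v$; since $u$, hence $v$, contains no factor $1^p$, that block has length at most $p-1$, so $a_j\le p$. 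This yields a composition of $n+1$ into $n+1-w$ parts in $\{1,\dots,p\}$.

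Conversely, from a composition $(a_1,\dots,a_{n+1-w})$ of $n+1$ with all parts in $\{1,\dots,p\}$, form $1^{a_1-1}0\,1^{a_2-1}0\cdots 1^{a_{n+1-w}-1}0$: it has length $\sum_j a_j=n+1$, ends in $0$, contains $\sum_j(a_j-1)=(n+1)-(n+1-w)=w$ ones, and has no run of more than $p-1$ consecutive $1$s; deleting the final $0$ gives a string of ${\cal F}^{(p)}_n$ of weight $w$. The two constructions are mutually inverse, so the bijection follows and the proof is complete. As an alternative, one can induct on $n$ using the partition ${\cal F}^{(p)}_n=\bigcup_{i=0}^{p-1}1^i0\,{\cal F}^{(p)}_{n-i-1}$ from the previous proof: writing $N(n,w)$ for the number of weight-$w$ vertices, it gives $N(n,w)=\sum_{i=0}^{p-1}N(n-i-1,w-i)$, which is precisely the generalized Pascal recurrence (\ref{eqn:pascal}) obeyed by $\binom{n-w+1}{w}_{p-1}$, with base cases $n\le p-1$ reducing to $\binom{n}{w}$. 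In either approach the only delicate point is the off-by-one bookkeeping (the appended $0$, equivalently the shift to $n+1$ parts-sum and $n+1-w$ parts); once that is fixed, every verification is routine.
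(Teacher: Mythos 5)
Your proof is correct and follows essentially the same route as the paper: append a terminal $0$, decompose $u0$ into blocks $1^{a_j-1}0$ (your zero-position bookkeeping is just another way of phrasing the paper's unique factorization over the alphabet $\{0,10,\dots,1^{p-1}0\}$), and read off a composition of $n+1$ into $n+1-w$ parts in $\{1,\dots,p\}$, then invoke the composition-counting proposition. The verification of the bounds on the parts and the explicit inverse are exactly as in the paper, so nothing further is needed.
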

\begin{proof} 
Let $u\in {\cal F}^{(p)}_n$
with $w(u)= w$. Let $E$ be the set of strings $E=\{e_1,e_2,\dots,e_p\}$ where $e_1=0$ and $e_i=1^{i-1}0$ for $i\in [\![2,p]\!] .$ 
In the string $u0$, concatenation of $u$ with 0, every sequence of $1$s is followed by at least one $0$. 
Therefore $u0$ can be decomposed as concatenation of strings in $E$ and this decomposition is unique.
Consider the mapping from $E$ to $\{1,2,\dots,p\}$ defined by $f(e_i)=i=w(e_i)+1$. Extending this mapping to concatenation of strings of $E$ we  can associate to $u0$  a list of integers $a_1,a_2\dots, a_k$ in $ [\![1,p]\!] $ with $\sum_{i=1}^k{(a_i-1)}=w(u0)=w$ and  $\sum_{i=1}^k{a_i}=n+1$. From this two relations $k=n+1-w$ and thus $n+1=a_1+a_2+\dots+ a_k$ defines a composition of $n+1$ in $n-w+1$ parts belonging to $\{1,2,\dots,p\}$. Il is clear that the process can be reversed, starting from such a composition $n+1=a_1+a_2+\dots+ a_{n-w+1}$ the string $f^{-1}(a_1)f^{-1}(a_2)\dots f^{-1}(a_{n-w+1})$ is $v0$ where $v$ is a string of ${\cal F}^{(p)}_n$ with weight $w$.
\end{proof}\qed

As a corollary we obtain a combinatorial proof of the equality
\begin{equation*}
F_n^{(p)}=\sum_{w=0}^{\left\lfloor \frac{(n+1)(p-1)}{p} \right\rfloor}{\binom{n-w+1}{w}_{p-1}}\,.
\end{equation*}
Therefore $p$-th order Fibonacci numbers appear as sum of diagonal entries in generalized Pascal triangle of $p$-nomial coefficients (Figure~\ref{fig:pascal}).

Since $\binom{b}{a}_{p-1} >0$ is and only if $a\leq b(p-1)$, another immediate consequence of Theorem~\ref{th:weight} is the following proposition.
\begin{proposition} 
The maximum weight of a vertex of $\Gamma_n^{(p)}$ is $\left\lfloor \frac{(n+1)(p-1)}{p}\right\rfloor$.
\end{proposition}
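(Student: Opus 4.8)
The plan is to derive the proposition directly from Theorem~\ref{th:weight} together with the elementary support properties of $p$-nomial coefficients recalled in Section~\ref{sec:pnomial}. The number of vertices of $\Gamma_n^{(p)}$ of weight $w$ is $\binom{n-w+1}{w}_{p-1}$, so the maximum attainable weight is the largest $w$ for which this coefficient is nonzero. Since $\binom{b}{a}_{p-1}>0$ if and only if $0\le a\le b(p-1)$ (with $b=n-w+1\ge 0$ automatically since $w\le n$), the relevant condition is $w\le (n-w+1)(p-1)$.

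First I would solve this inequality for $w$. Expanding, $w\le (p-1)(n+1)-(p-1)w$, hence $pw\le (p-1)(n+1)$, i.e. $w\le \frac{(n+1)(p-1)}{p}$. Since $w$ is an integer, the largest such $w$ is $\left\lfloor \frac{(n+1)(p-1)}{p}\right\rfloor$. To complete the argument I would check that this bound is actually achieved: for $w^*=\left\lfloor \frac{(n+1)(p-1)}{p}\right\rfloor$ we have $w^*\le (n-w^*+1)(p-1)$ by construction, and $n-w^*+1\ge 0$, so $\binom{n-w^*+1}{w^*}_{p-1}\ge 1$, meaning there genuinely is a vertex of that weight.

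There is essentially no obstacle here; the only point requiring a little care is confirming that the nonnegativity constraint $b=n-w^*+1\ge 0$ does not become binding before the support constraint $w\le b(p-1)$ does. This is immediate: $w^*\le \frac{(n+1)(p-1)}{p}<n+1$ for $p\ge 2$ (strictly, since $(p-1)/p<1$), so $n-w^*+1\ge 1>0$. Thus the maximum weight is determined solely by the $p$-nomial support condition, and the stated formula follows. I would present this as a two-line deduction from Theorem~\ref{th:weight}, exactly as the surrounding text already signals by calling it an \emph{immediate consequence}.
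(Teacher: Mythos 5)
Your proposal is correct and follows exactly the route the paper takes: it declares the proposition an immediate consequence of Theorem~\ref{th:weight} together with the fact that $\binom{b}{a}_{p-1}>0$ if and only if $0\le a\le b(p-1)$, which is precisely the inequality $w\le (n-w+1)(p-1)$ you solve. Your write-up merely makes explicit the one-line algebra and the attainment check that the paper leaves implicit.
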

%%%%%%%%%%%%%%%%%%%%%%%%%%%%%%%%%%%%%%%%%%%%%%%%%%%%%%%%%%%is 
%%%%%%%%%%%%%%%%%%%%%%%%%%%%%%%%%%%%%%%%%%%%%%%%%%%%%%%%%%%
\section{Edges in $p$-th order Fibonacci cubes }
\label{sec:edge}
%%%%%%%%%%%%%%%%%%%%%%%%%%%%%%%%%%%%%%%%%%%%%%%%%%%%%%%%%%%
%%%%%%%%%%%%%%%%%%%%%%%%%%%%%%%%%%%%%%%%%%%%%%%%%%%%%%%%%%%

The recurrence structure of $p$-th order generalized Fibonacci cubes is the following {\em fundamental decomposition}.

Consider the partition ${\cal F}^{(p)}_n=\bigcup_{i=1}^{p}{1^{i-1}0{\cal F}^{(p)}_{n-i}}$ of the vertex set of $\Gamma_n^{(p)}$ for $n\geq p$. From this partition there exist two kind of edges $uv$ in $\Gamma_n^{(p)}$. Those such that $u,v$ belong to the same ${1^{i-1}0{\cal F}^{(p)}_{n-i}}$ and those where $u,v$ are in different sets.

For $i\in [\![1,p]\!]$ the set $1^{i-1}0{\cal F}^{(p)}_{n-i}$ induced a subgraph of  $\Gamma_n^{(p)}$  isomorphic to $\Gamma_{n-i}^{(p)}$. Moreover for $i,j\in [\![1,p]\!]$ with $j<i$ any vertex $1^{i-1}0x$ has exactly one neighbor in $1^{j-1}0\Gamma_{n-j}^{(p)}$ which is the vertex $1^{j-1}01^{i-j-1}0x$.\\ 

Therefore the two kind of edges are:
\begin{itemize}
\item For all $i\in [\![1,p]\!]$ edges from a subgraph isomorphic to $\Gamma_{n-i}^{(p)}$. 

\item For all $i,j\in [\![1,p]\!]$  with $j<i$ edges from a matching between  ${1^{i-1}0{\cal F}^{(p)}_{n-i}}$ and ${1^{j-1}01^{i-j-1}{\cal F}^{(p)}_{n-j}}$.
\end{itemize}

Since there exist $i-1$ integers $j$ in $[\![1,p]\!]$ with $j<i$, a vertex in ${1^{i-1}0{\cal F}^{(p)}_{n-i}}$ is adjacent to exactly $i-1$ vertices in some ${1^{j-1}0{\cal F}^{(p)}_{n-i}}$ with $j<i$. Therefore, counting the edges by there endpoint of maximum weight, the number of edges of the second kind is $\sum_{i=2}^{p}(i-1){|V(\Gamma^{(p)}_{n-i})|}$ and we deduce the following result.

\begin{proposition}
 For $n\geq p$
\begin{equation}\label{eq:recedge}
|E(\Gamma^{(p)}_{n})|=\sum_{i=1}^{p}{|E(\Gamma^{(p)}_{n-i})|}+\sum_{i=2}^{p}(i-1){|V(\Gamma^{(p)}_{n-i})|}\,.
\end{equation}
\end{proposition}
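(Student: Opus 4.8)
The plan is to verify Equation~\eqref{eq:recedge} directly from the fundamental decomposition already set up in the text, simply by collecting the two kinds of edges. Recall that for $n\geq p$ the vertex set of $\Gamma^{(p)}_n$ is partitioned as $\bigcup_{i=1}^{p}1^{i-1}0{\cal F}^{(p)}_{n-i}$, and each block $1^{i-1}0{\cal F}^{(p)}_{n-i}$ induces a copy of $\Gamma^{(p)}_{n-i}$. First I would observe that every edge $uv$ of $\Gamma^{(p)}_n$ is of exactly one of two types: either both endpoints lie in the same block, or they lie in two different blocks. The edges of the first type are exactly the edges internal to the $p$ induced copies, contributing $\sum_{i=1}^{p}|E(\Gamma^{(p)}_{n-i})|$; since the blocks partition $V$, no such edge is counted twice.

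Next I would count the edges of the second type. The text has already established that for $j<i$ a vertex $1^{i-1}0x$ of the block indexed by $i$ has a unique neighbour $1^{j-1}01^{i-j-1}0x$ in the block indexed by $j$, and that there are no edges between blocks $i$ and $j$ with $i=j$ already handled. So each vertex in block $i$ has exactly $i-1$ neighbours lying in lower-indexed blocks (one for each $j\in[\![1,i-1]\!]$), and all its cross-block neighbours are of this form. I would then count cross-block edges by charging each to its endpoint of larger block-index (equivalently, as noted in the text, its endpoint of larger Hamming weight, since $1^{i-1}0x$ has weight $i-1+w(x)$ which strictly exceeds the weight $j-1+w(x)$ of its partner when $j<i$). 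Each vertex of block $i$ is charged exactly $i-1$ times, so the total number of cross-block edges is $\sum_{i=1}^{p}(i-1)\,|1^{i-1}0{\cal F}^{(p)}_{n-i}| = \sum_{i=2}^{p}(i-1)\,|V(\Gamma^{(p)}_{n-i})|$, the $i=1$ term vanishing. Adding the two contributions gives \eqref{eq:recedge}.

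There is essentially no obstacle here: all the structural facts needed — the partition, the isomorphism of each block to a smaller $\Gamma^{(p)}$, the uniqueness and explicit form of the cross-block neighbour, and the weight comparison that makes the charging well-defined — are precisely the statements proved in the paragraphs preceding the proposition. The only point requiring a word of care is confirming that the charging scheme is a genuine bijection from cross-block edges to (vertex, lower block) incidences: every cross-block edge has a well-defined higher-indexed endpoint, and conversely the edge is recovered from that endpoint together with the index $j$ of the block containing the other endpoint, so no edge is missed and none is double-counted. With that checked, the identity follows immediately by induction-free bookkeeping, and the proof is complete.

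\qed
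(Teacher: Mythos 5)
Your proof is correct and follows essentially the same route as the paper, which establishes Equation~\eqref{eq:recedge} in the paragraphs immediately preceding the proposition: partition the vertex set into the blocks $1^{i-1}0{\cal F}^{(p)}_{n-i}$, count intra-block edges via the induced copies of $\Gamma^{(p)}_{n-i}$, and count cross-block edges by charging each to its endpoint of larger index (equivalently larger weight), each vertex of block $i$ being charged exactly $i-1$ times. Your added remark verifying that this charging is a bijection is a slight tightening of the paper's wording but not a different argument.
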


\begin{theorem}\label{th:genesize}
 The generating function of the size of $\Gamma_n^{(p)}$ is 
\begin{equation*}
\sum_{n\geq 0}{|E(\Gamma^{(p)}_{n}})|t^n=\frac{t+2t^2+\dots+(p-1)t^{p-1}}{(1-t-t^2-\dots-t^{p})^2}.
\end{equation*}
\end{theorem}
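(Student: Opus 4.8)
The plan is to combine the recurrence (\ref{eq:recedge}) with the known generating function for the vertex counts, turning the linear recurrence into an identity between formal power series. Let $e(t)=\sum_{n\geq 0}|E(\Gamma^{(p)}_n)|t^n$ and $v(t)=\sum_{n\geq 0}|V(\Gamma^{(p)}_n)|t^n$; by the corollary already proved, $v(t)=\dfrac{1+t+\dots+t^{p-1}}{1-t-t^2-\dots-t^p}$. First I would record the small cases: for $0\le n\le p-1$ we have $\Gamma^{(p)}_n=Q_n$, so $|E(\Gamma^{(p)}_n)|=n2^{n-1}$, and in particular the recurrence (\ref{eq:recedge}) only governs the coefficients for $n\ge p$. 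The main step is then to multiply (\ref{eq:recedge}) through by $t^n$ and sum over $n\ge p$, recognizing each sum $\sum_{n\ge p}|E(\Gamma^{(p)}_{n-i})|t^n = t^i\bigl(e(t)-\text{(low-order tail)}\bigr)$ and similarly for the vertex sums, so that $e(t)$ satisfies $e(t)\cdot(1-t-\dots-t^p) = P(t) + \bigl(t^2+2t^3+\dots+(p-1)t^p\bigr)v(t)$, where $P(t)$ is an explicit polynomial collecting the initial-condition corrections.

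The delicate point — and I expect this to be the main obstacle — is the bookkeeping of the finitely many low-degree correction terms. When one writes $\sum_{n\ge p}|E(\Gamma^{(p)}_{n-i})|t^n = t^i e(t) - t^i\sum_{m=0}^{p-i-1}|E(\Gamma^{(p)}_m)|t^m$, and likewise $\sum_{n\ge p}|V(\Gamma^{(p)}_{n-i})|t^n = t^i v(t) - t^i\sum_{m=0}^{p-i-1}|V(\Gamma^{(p)}_m)|t^m$, all of these subtracted tails involve only hypercube data ($|E(Q_m)|=m2^{m-1}$, $|V(Q_m)|=2^m$). One must also add back the genuine values $\sum_{n=0}^{p-1}n2^{n-1}t^n$ on the left. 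The claim is that, after collecting everything, the polynomial corrections cancel completely, leaving the clean rational form. Rather than verifying this by brute expansion, I would prove the cancellation structurally: define $\widetilde e(t) := e(t)(1-t-\dots-t^p) - (t^2+2t^3+\dots+(p-1)t^p)v(t)$ and show directly that the coefficient of $t^n$ in $\widetilde e(t)$ vanishes for all $n\ge p$ (this is exactly (\ref{eq:recedge})) and equals the coefficient of $t^n$ in the target numerator $t+2t^2+\dots+(p-1)t^{p-1}$ — rescaled appropriately — for $n\le p-1$. For the low range one uses $|E(\Gamma^{(p)}_n)|=n2^{n-1}$, $|V(\Gamma^{(p)}_n)|=2^n$ and a short direct check; for instance one verifies $n2^{n-1} - \sum_{i=1}^{n}(n-i)2^{n-i-1} - \sum_{i=2}^{n}(i-1)2^{n-i}$ telescopes to the coefficient $(n)$ of $t^n$ in $t+2t^2+\dots$, using the identity $\sum_{i=1}^{n} 2^{n-i}\bigl((n-i)/2\bigr) \;+\;\sum_{i=2}^n (i-1)2^{n-i}$ and the standard sum $\sum_{j\ge 1} j\,2^{-j}$-type collapse.

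So the proof outline is: (i) establish $|E(\Gamma^{(p)}_n)|=n2^{n-1}$ and $|V(\Gamma^{(p)}_n)|=2^n$ for $0\le n\le p-1$; (ii) multiply the recurrence (\ref{eq:recedge}) by $t^n$, sum over $n\ge p$, and express every sum in terms of $e(t)$, $v(t)$, and explicit finite tails; (iii) substitute $v(t)=\dfrac{1+t+\dots+t^{p-1}}{1-t-t^2-\dots-t^p}$, so the right-hand side becomes $\dfrac{(t^2+2t^3+\dots+(p-1)t^p)(1+t+\dots+t^{p-1})}{(1-t-t^2-\dots-t^p)^2}+\text{polynomial}$; (iv) check that the product $(t^2+2t^3+\dots+(p-1)t^p)(1+t+\dots+t^{p-1})$ plus the polynomial corrections, divided through, collapses to $\dfrac{t+2t^2+\dots+(p-1)t^{p-1}}{(1-t-t^2-\dots-t^p)^2}$, equivalently that $(t+2t^2+\dots+(p-1)t^{p-1})(1-t-\dots-t^p)$ equals $(t^2+2t^3+\dots+(p-1)t^p)(1+\dots+t^{p-1})+(\text{poly corrections})\cdot(1-t-\dots-t^p)$; this last is a polynomial identity one can verify by comparing coefficients, and it is where all the initial-condition arithmetic lands. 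Solving for $e(t)$ then yields the stated formula.
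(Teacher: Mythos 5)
Your proposal is correct and follows essentially the same route as the paper: both convert the recurrence (\ref{eq:recedge}) into the functional equation $e(t)\,(1-t-\cdots-t^{p}) = \bigl(t+2t^{2}+\cdots+(p-1)t^{p-1}\bigr) + \bigl(t^{2}+2t^{3}+\cdots+(p-1)t^{p}\bigr)v(t)$ and solve for $e(t)$. The only point of divergence is the treatment of the boundary terms: the paper shows combinatorially that the recurrence already holds for $1\le n<p$ once one sets $v_{-1}=1$ (the extra $n$ edges at the vertex $1^{n}$ being absorbed as $n\,v_{-1}$), so no correction polynomial ever appears, whereas you recover the same correction $P(t)=t+2t^{2}+\cdots+(p-1)t^{p-1}$ by explicit bookkeeping with $|E(Q_{n})|=n2^{n-1}$ and $|V(Q_{n})|=2^{n}$ --- a computation that does check out (the coefficient of $t^{n}$ is $n2^{n-1}-\sum_{i=1}^{n}(n-i)2^{n-i-1}-\sum_{i=2}^{n}(i-1)2^{n-i}=n$), after which your final polynomial identity is immediate.
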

\begin{proof}
It will be convenient to introduce the following notations. For an integer $i\in\ZZ$ define $m_i=0$ for $i<0$ and $m_i=|E(\Gamma^{(p)}_{i})|$ otherwise. Furthermore, let us set $v_i=F_{i+p}^{(p)}$ for $i\geq -p$. Note that $v_i=0$ for $i\leq -2$, $v_{-1}=1$ and $v_i=|V(\Gamma^{(p)}_{i})|$ for $i\geq 0$.

Let $n\geq p$ then since $n-i\geq 0$ for $i\leq p$, the identity~(\ref{eq:recedge}) can be rewritten 
\begin{equation}\label{eq:withournotation}
m_n=\sum_{i=1}^{p}{m_{n-i}}+\sum_{i=1}^{p-1}{i\,v_{n-i-1}}\,.
\end{equation}
Assume now $1\leq n< p$. The partition of the vertex set becomes $${\cal F}^{(p)}_n=\bigcup_{i=1}^{n}{1^{i-1}0{\cal F}^{(p)}_{n-i}}\bigcup \{1^n\}\,.$$
The vertex $1^n$ is adjacent to the $n$ vertices belonging to  $\{1^{i-1}01^{n-i}\st 1\leq i \leq n \}$.

We have thus the relation 
\begin{equation*}
|E(\Gamma^{(p)}_{n})|=\sum_{i=1}^{n}{|E(\Gamma^{(p)}_{n-i})|}+\sum_{i=2}^{n}(i-1){|V(\Gamma^{(p)}_{n-i})|}+n\,.
\end{equation*} 
or equivalently
\begin{equation*}
m_n=\sum_{i=1}^{n}{m_{n-i}}+\sum_{i=1}^{n-1}{i\,v_{n-i-1}}+n\,v_{-1}\,.
\end{equation*}
Therefore, since $\sum_{i=n+1}^{p}m_{n-i}=0$ and $\sum_{i=n+1}^{p-1}{i\,v_{n-i-1}}=0$,  relation (\ref{eq:withournotation})  is also satisfied for $n< p$.

Multiplying identity (\ref{eq:withournotation}) by $t^n$ and summing for $n\geq 0$ we obtain
\begin{eqnarray*}
\sum_{n\geq 0}t^n m_n&=&\sum_{n\geq 0}t^n{\sum_{i=1}^{p}{m_{n-i}}}+\sum_{n\geq 0}t^n{\sum_{i=1}^{p-1}{i\,v_{n-i-1}}}\\
&=&\sum_{i=1}^{p}{t^i\sum_{n\geq 0}t^{n-i}{m_{n-i}}}+\sum_{i=1}^{p-1}{i\,t^i\sum_{n\geq 0}{t^{n-i}v_{n-i-1}}}\\
&=&\sum_{i=1}^{p}{t^i\sum_{n\geq 0}t^{n}{m_{n}}}+\sum_{i=1}^{p-1}{i\,t^i\sum_{n\geq 0}{t^{n}v_{n-1}}}\,.
\end{eqnarray*}
From here we obtain
\begin{equation*}
(1-\sum_{i=1}^{p}{t^i})\sum_{n\geq 0}t^n m_n=(\sum_{i=1}^{p-1}{i\,t^i})\sum_{n\geq 0}{t^{n}v_{n-1}}\,.
\end{equation*} 
From the generating function of the sequence $(F_m^{(k)})_{m\geq0}$
\begin{equation*}
\sum_{n\geq 0}{t^{n}v_{n-1}}=\sum_{n\geq 1-p}{t^{n}v_{n-1}}=\sum_{n\geq 1-p}{t^{n}F_{n+p-1}}=t^{1-p}\sum_{m\geq0}{F_mt^m}=(1-\sum_{i=1}^{p}{t^i})^{-1}\,.
\end{equation*}
Therefore $\sum_{n\geq 0}t^n m_n=(\sum_{i=1}^{p-1}{i\,t^i})(1-\sum_{i=1}^{p}{t^i})^{-2}$.
This completes the proof.
\end{proof}\qed
%%%%%%%%%%%%%%%%%%%%%%%%%%%%%%%%%%%%%%%%%%%%%%%%%%%%%%%%%%
%%%%%%%%%%%%%%%%%%%%%%%%%%%%%%%%%%%%%%%%%%%%%%%%%%%%%%%%%%%
%

%%%%%%%%%%%%%%%%%%%%%%%%%%%%%%%%%%%%%%%%%%%%%%%%%%%%%%%%%%
%%%%%%%%%%%%%%%%%%%%%%%%%%%%%%%%%%%%%%%%%%%%%%%%%%%%%%%%%%
\section{Cube polynomial of $p$-th order Fibonacci cubes }
\label{sec:distancecube}
%%%%%%%%%%%%%%%%%%%%%%%%%%%%%%%%%%%%%%%%%%%%%%%%%%%%%%%%%%
%%%%%%%%%%%%%%%%%%%%%%%%%%%%%%%%%%%%%%%%%%%%%%%%%%%%%%%%%%
This section is devoted to the determination of the generating function of the cube polynomial using the weight enumerator polynomial.

Let $\A$ be a set of strings generated freely (as a monoid) by a finite or  infinite alphabet
$E$. 
That means that every strings $s \in \A$ can be written 
uniquely as a concatenation of zero or more strings from $E$. Note that the empty string $\lambda$ belongs to $\A$.
Associate with any string $s \in \A$ a polynomial, possibly multivariate,  $\theta_s$. We will say that $s \rightarrow \theta_s$ is \emph{concatenation multiplicative} over $\A$ if $\theta_{\lambda}=1$ and $\theta_{s_1s_2\ldots s_k}=\theta_{s_1}\theta_{s_2}\ldots\theta_{s_k}$ for any choice of elements $s_i$ in $E$.

Note that if $s \rightarrow \theta_s$ and $s \rightarrow \phi_s$ are concatenation multiplicative then $s \rightarrow \theta_s\phi_s$ is concatenation multiplicative.
Let us recall the following proposition proved in~\cite{M-2025a}.

\begin{proposition}\label{pro:mono}

Let $\A$ be a set of strings generated freely, as a monoid, by the alphabet $E$ and $s \rightarrow \theta_s$ concatenation multiplicative over $\A$ then
\begin{equation*}
\sum_{s\in \A}{\theta_s}=\frac{1}{1-\sum_{s\in E}{\theta_s}}\,.
\end{equation*}
\end{proposition}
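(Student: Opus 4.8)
The plan is to exploit the free monoid structure directly: by hypothesis every $s\in\A$ has a \emph{unique} expression $s=s_1s_2\cdots s_k$ with $k\geq 0$ and each $s_i\in E$, the case $k=0$ corresponding to the empty string $\lambda$. This gives a bijection $\A \;\longleftrightarrow\; \bigcup_{k\geq 0} E^k$, where $E^0=\{\lambda\}$, and the bijection is compatible with the weights in the sense that $\theta_{s_1\cdots s_k}=\theta_{s_1}\theta_{s_2}\cdots\theta_{s_k}$ by concatenation multiplicativity (with the empty product equal to $\theta_\lambda=1$).

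First I would reindex the sum over $\A$ according to this decomposition:
\begin{equation*}
\sum_{s\in \A}\theta_s \;=\; \sum_{k\geq 0}\ \sum_{(s_1,\dots,s_k)\in E^k}\theta_{s_1s_2\cdots s_k}\;=\;\sum_{k\geq 0}\ \sum_{(s_1,\dots,s_k)\in E^k}\theta_{s_1}\theta_{s_2}\cdots\theta_{s_k}.
\end{equation*}
Then I would factor the inner sum over the product set $E^k$ as a $k$-fold product of identical sums,
\begin{equation*}
\sum_{(s_1,\dots,s_k)\in E^k}\theta_{s_1}\theta_{s_2}\cdots\theta_{s_k}\;=\;\Bigl(\sum_{s\in E}\theta_s\Bigr)^{k},
\end{equation*}
and finally sum the resulting geometric series in $k$ to obtain $\dfrac{1}{1-\sum_{s\in E}\theta_s}$.

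The only real point requiring care — and the step I expect to be the main obstacle to make rigorous — is justifying the rearrangements of (possibly infinite) sums and the convergence of the geometric series. The clean way is to work throughout in a ring of formal power series, interpreting all the displayed sums as formal: one assumes (as holds in all our intended applications, e.g.\ when $\theta_s$ tracks Hamming weight or dimension and $E$ consists of nonempty strings) that $\sum_{s\in E}\theta_s$ has zero constant term, so that only finitely many values of $k$, and within each $E^k$ only finitely many tuples, contribute to the coefficient of any given monomial. Under that locally-finite hypothesis the interchange of the sum over $k$ with the sum over $E^k$, the factorization of the inner sum, and the geometric-series identity $\sum_{k\geq0}z^k=(1-z)^{-1}$ are all legitimate, and the proof is complete. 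I would state this formal-power-series convention explicitly at the start of the argument so the manipulations above are unambiguous.
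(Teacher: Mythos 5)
Your proof is correct and is the standard argument: the paper itself does not reprove Proposition~\ref{pro:mono} but only recalls it from the cited reference, and the unique-factorization/geometric-series computation you give (with the sums interpreted formally and the local-finiteness caveat that $\sum_{s\in E}\theta_s$ has zero constant term, which holds here since every letter of $E$ is a nonempty string so $\theta_s$ carries a positive power of $t$) is exactly the intended justification. Nothing further is needed.
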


\begin{definition}
Let $G$ be a subgraph of $Q_n$ and $w_d(G)$, $d\ge 0$, be the number of vertices of $G$ with weight $d$. Then the weight enumerator polynomial of $G$ is the counting polynomial
$$W_{G}(x)=\sum_{u\in V(G)} x^{w(u)}= \sum_{d\geq 0} w_{d}(G) x^d\,.$$
\end{definition}

Let us call {\em extended  $p$-th order Fibonacci strings} the strings of $\bigcup_{n\geq 0}\{u0\st u\in{\cal F}^{(p)}_n\}$ obtained by adding $0$ to $p$-th order Fibonacci strings.
As noticed in Section~\ref{sec:enumprop}
 extended  $p$-th order Fibonacci strings can be uniquely obtained as concatenations of strings from $E=\{e_1,e_2,\dots,e_p\}$ where $e_1=0$ and $e_i=1^{i-1}0$ for $i\in [\![2,p]\!]$. The monoid generated freely by $E$ is thus
\begin{equation}\label{eq:partition}
\A= \{\lambda\}\cup_{n\geq 0}\{u0\st u\in{\cal F}^{(p)}_n\}.
\end{equation}

Associate with any string binary $s$ the polynomial $\theta_s(x,t)=x^{w(s)}t^{l(s)}$ where $l(s)$ and $w(s)$ are the length and weight of the string, respectively. It is immediate that  $s \rightarrow \theta_s$ is concatenation multiplicative over $\A$ and that $\theta_{s0}=t\theta_{s}$.  Since   $\sum_{s\in E}{\theta_s}=t+xt^2+\dots+x^{p-1}t^{p}$, using  Proposition~\ref{pro:mono}, we obtain 
\begin{equation}\label{eq:somoverA}
\sum_{s\in \A}{\theta_s}=\frac{1}{1-t(1+xt+\dots+x^{p-1}t^{p-1})}\,.
\end{equation}
From the partition~(\ref{eq:partition})  and the equality~(\ref{eq:somoverA}) we deduce
\begin{equation*}
\sum_{n\geq0}\sum_{s\in{\cal F}^{(p)}_n}t^p{\theta_s}=\frac{1}{t}\left(\frac{1}{1-t(1+xt+\dots+x^{p-1}t^{p-1})}-1\right)\,.
\end{equation*}
Therefore the generating function of $W_{ \Gamma^{(p)}_n}(x)$ is given by the following result.
\begin{theorem} \label{thm:weightfibgene}
The generating function of $W_{\Gamma_n^{(p)}}(x)$ is
\begin{equation*}
\sum_{n\geq0}{W_{\Gamma_n^{(p)}}(x)t^n}=\frac{1+xt+x^2t^2+\dots+x^{p-1}t^{p-1}}{1-t(1+xt++x^2t^2\dots+x^{p-1}t^{p-1})}\,.
\end{equation*}
\end{theorem}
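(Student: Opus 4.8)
The plan is to read off the generating function directly from the monoid structure that has already been set up in this section, so that Theorem~\ref{thm:weightfibgene} becomes essentially a bookkeeping consequence of Proposition~\ref{pro:mono}. First I would recall that $W_{\Gamma_n^{(p)}}(x)=\sum_{u\in{\cal F}^{(p)}_n}x^{w(u)}$, and that for the bivariate weight-length polynomial $\theta_s(x,t)=x^{w(s)}t^{l(s)}$ one has $\sum_{u\in{\cal F}^{(p)}_n}\theta_u=t^{-n}\cdot t^{n}\sum_{u\in{\cal F}^{(p)}_n}x^{w(u)}t^{n}$, i.e. summing $\theta_u$ over strings of a fixed length $n$ picks out $W_{\Gamma_n^{(p)}}(x)t^n$. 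Thus
\begin{equation*}
\sum_{n\ge0}W_{\Gamma_n^{(p)}}(x)t^n=\sum_{n\ge0}\sum_{u\in{\cal F}^{(p)}_n}\theta_u .
\end{equation*}
The right-hand side is exactly what the section has been computing, except shifted by the trailing $0$ coming from the identification of $\A$ with $\{\lambda\}\cup\bigcup_{n\ge0}\{u0\st u\in{\cal F}^{(p)}_n\}$ in~(\ref{eq:partition}).

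Next I would exploit $\theta_{u0}=t\,\theta_u$ to convert the sum over $\A$ into the sum over extended Fibonacci strings without the final $0$. Concretely, each nonempty $s\in\A$ is of the form $u0$ for a unique $u\in{\cal F}^{(p)}_n$ (with $n=l(s)-1$), so
\begin{equation*}
\sum_{s\in\A}\theta_s=1+\sum_{n\ge0}\sum_{u\in{\cal F}^{(p)}_n}\theta_{u0}=1+t\sum_{n\ge0}\sum_{u\in{\cal F}^{(p)}_n}\theta_u .
\end{equation*}
Solving for the double sum and invoking the already-established equality~(\ref{eq:somoverA}) gives
\begin{equation*}
\sum_{n\ge0}W_{\Gamma_n^{(p)}}(x)t^n=\frac{1}{t}\left(\frac{1}{1-t(1+xt+\dots+x^{p-1}t^{p-1})}-1\right),
\end{equation*}
which is precisely the displayed intermediate formula just before the theorem.

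Finally I would simplify this rational expression. Writing $S=t(1+xt+\dots+x^{p-1}t^{p-1})$, the bracket equals $\dfrac{1-(1-S)}{1-S}=\dfrac{S}{1-S}$, so the whole thing is $\dfrac{S}{t(1-S)}=\dfrac{1+xt+\dots+x^{p-1}t^{p-1}}{1-t(1+xt+\dots+x^{p-1}t^{p-1})}$, which is the claimed generating function. I do not anticipate a real obstacle here: the content of the theorem has genuinely been done in the paragraphs preceding it (the free-monoid decomposition of extended $p$-th order Fibonacci strings, the concatenation-multiplicativity of $\theta$, and the application of Proposition~\ref{pro:mono}), and the only remaining work is the elementary algebraic shift by the trailing $0$ and the cosmetic rearrangement $S/(1-S)$. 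The one place to be careful is the indexing/degenerate-case check that the empty string $\lambda\in\A$ corresponds to $n$ with ${\cal F}^{(p)}_n$ contributing correctly — but since ${\cal F}^{(p)}_0=\{\lambda\}$ and $W_{\Gamma_0^{(p)}}(x)=1$, the constant-term bookkeeping matches, so no separate base case is needed.
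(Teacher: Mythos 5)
Your proposal is correct and follows essentially the same route as the paper: both use the free-monoid decomposition of extended $p$-th order Fibonacci strings, the concatenation-multiplicativity of $\theta_s(x,t)=x^{w(s)}t^{l(s)}$ together with Proposition~\ref{pro:mono} to get~(\ref{eq:somoverA}), and then shift out the trailing $0$ via $\theta_{u0}=t\,\theta_u$ and simplify $S/(t(1-S))$. The only blemish is the garbled identity $\sum_u\theta_u=t^{-n}\cdot t^n\sum_u x^{w(u)}t^n$ in your first paragraph, but the intended statement ($\sum_{u\in\mathcal{F}^{(p)}_n}\theta_u=W_{\Gamma_n^{(p)}}(x)t^n$) is clear and correct.
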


Note that Theorem~\ref{th:weight} can be also deduced from Theorem~\ref{thm:weightfibgene}. Indeed we can rewrite the generating function of $W_{\Gamma_n^{(p)}}(x)$
\begin{eqnarray*}
\sum_{n\geq0}{W_{\Gamma_n^{(p)}}(x)t^n}& = &(1+xt+\ldots+x^{p-1}t^{p-1})\sum_{a\geq0}{(t(1+xt+\ldots+x^{p-1}t^{p-1}))^a}\\
& = &\sum_{a\geq0}t^a(1+xt+\ldots+x^{p-1}t^{p-1})^{a+1}\,.
\end{eqnarray*}
The number of vertices with weight $w$ in $\Gamma_n^{(p)}$ is the coefficient of $t^nx^w$ in $\sum_{n\geq0}{W_{\Gamma_n^{(p)}}(x)t^n}$.
The expansion of $(1+xt+\dots+x^{p-1}t^{p-1})^{a+1}$ produces only monomials $t^bx^c$ with $b=c$ therefore they  contribute to $t^nx^w$ if and only if $a=n-w$ and $b=w$. Furthermore this contribution is  $\binom{a+1}{b}_{p-1}=\binom{n-w+1}{w}_{p-1}$ and we obtain the following alternate expression of the weight polynomial
\begin{equation}\label{eq:walt}
W_{\Gamma_n^{(p)}}(x)=\sum_{w=0}^{\left\lfloor \frac{(n+1)(p-1)}{p}\right\rfloor}\binom{n-w+1}{w}_{p-1}x^w\,.
\end{equation}

Since changing a $1$ to $0$ in a $p$-th order Fibonacci string gives a $p$-th order Fibonacci string it is clear, as noticed in~\cite {KM-2019a}, that $p$-th order generalized Fibonacci cubes are daisy cubes generated by the set of maximal $p$-th order Fibonacci strings.

if $G$ is a daisy cube, then the polynomials $D_{G}$ and $C_{G}$ are completely determined by the weight polynomial. More precisely it is proved in~\cite[Corollary 2.6]{KM-2019a} that for a daisy cube
\begin{equation*}
D_G(x,q)=C_G(x+q-1)=W_G(x+q)\,.
\end{equation*}

Therefore replacing $x$ by $x+1$ in the generating function of the weight polynomial obtained in Theorem~\ref{thm:weightfibgene} gives the generating function of the cube polynomial.

We deduce also that $D_{\Gamma_n^{(p)}}(x,q)=D_{\Gamma_n^{(p)}}(q,x)$  and replacing $x$ by $x+q$ in the generating function of $ W_{\Gamma_n^{(p)}}(x)$ gives that of the distance cube polynomial. 
\begin{corollary}\label{co:cubepolyfibgene}
The generating function of $C_{\Gamma_n^{(p)}}(x)$ 	and $D_{\Gamma_n^{(p)}}(x,q)$ are
\begin{eqnarray*}
\sum_{n\geq 0}C_{\Gamma_n^{(p)}}(x)t^n&=&\frac{1+(x+1)t+(x+1)^2t^2+\ldots+(x+1)^{p-1}t^{p-1}}{1-t(1+(x+1)t+(x+1)^2t^2+\ldots+ (x+1)^{p-1}t^{p-1})}\\
\sum_{n\geq 0}D_{\Gamma_n^{(p)}}(x,q)t^n&=&\frac{1+(x+q)t+(x+q)^2t^2+\ldots+(x+q)^{p-1}t^{p-1}}{1-t(1+(x+q)t+(x+q)^2t^2+\ldots+ (x+q)^{p-1}t^{p-1})}\,.
\end{eqnarray*}
\end{corollary}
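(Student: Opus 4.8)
The plan is to deduce Corollary~\ref{co:cubepolyfibgene} directly from Theorem~\ref{thm:weightfibgene} together with the cited identity $D_G(x,q)=C_G(x+q-1)=W_G(x+q)$ valid for daisy cubes. The work has essentially already been done in the paragraphs preceding the statement; what remains is a clean substitution argument. First I would recall that $p$-th order generalized Fibonacci cubes $\Gamma_n^{(p)}$ are daisy cubes (generated by the maximal $p$-th order Fibonacci strings), so the daisy-cube formula of~\cite[Corollary 2.6]{KM-2019a} applies to each $\Gamma_n^{(p)}$ individually.

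Next I would invoke the relation $W_{\Gamma_n^{(p)}}(x+q)=D_{\Gamma_n^{(p)}}(x,q)$ and, in particular, $W_{\Gamma_n^{(p)}}(x+1)=C_{\Gamma_n^{(p)}}(x)$. Since these identities hold for every $n$, I can substitute $x\mapsto x+1$ (respectively $x\mapsto x+q$) termwise in the generating function
\[
\sum_{n\geq0}W_{\Gamma_n^{(p)}}(x)t^n=\frac{1+xt+x^2t^2+\dots+x^{p-1}t^{p-1}}{1-t(1+xt+x^2t^2+\dots+x^{p-1}t^{p-1})}
\]
from Theorem~\ref{thm:weightfibgene}. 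The substitution is legitimate at the level of formal power series in $t$ because for each fixed $n$ the coefficient $W_{\Gamma_n^{(p)}}(x)$ is an honest polynomial in $x$, so evaluating at $x+1$ or $x+q$ is just a polynomial substitution and commutes with forming the generating series. Carrying out the substitution replaces every occurrence of $x$ by $x+1$ (resp.\ $x+q$) in the displayed rational function, yielding exactly the two claimed expressions.

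There is essentially no obstacle here: the only point requiring a word of care is the justification that the termwise substitution in $x$ is valid, which follows from the polynomiality of each coefficient noted above, and the observation — already made in the text — that the symmetry $D_{\Gamma_n^{(p)}}(x,q)=D_{\Gamma_n^{(p)}}(q,x)$ is consistent with (indeed follows from) the symmetric role of $x$ and $q$ in $x+q$. I would therefore present the proof as: apply the daisy-cube identity of~\cite{KM-2019a} to each $\Gamma_n^{(p)}$, then substitute $x\mapsto x+1$ and $x\mapsto x+q$ in the generating function of Theorem~\ref{thm:weightfibgene}, and simplify.

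\begin{proof}
As noted above, each $\Gamma_n^{(p)}$ is a daisy cube, so by~\cite[Corollary 2.6]{KM-2019a} we have $C_{\Gamma_n^{(p)}}(x)=W_{\Gamma_n^{(p)}}(x+1)$ and $D_{\Gamma_n^{(p)}}(x,q)=W_{\Gamma_n^{(p)}}(x+q)$ for every $n\geq 0$. For each fixed $n$ the coefficient $W_{\Gamma_n^{(p)}}(x)$ is a polynomial in $x$, so substituting $x+1$ or $x+q$ for $x$ is a polynomial substitution and commutes with forming the generating series in $t$. Applying these substitutions to the identity of Theorem~\ref{thm:weightfibgene},
\begin{equation*}
\sum_{n\geq0}{W_{\Gamma_n^{(p)}}(x)t^n}=\frac{1+xt+x^2t^2+\dots+x^{p-1}t^{p-1}}{1-t(1+xt+x^2t^2+\dots+x^{p-1}t^{p-1})}\,,
\end{equation*}
we replace $x$ by $x+1$ to obtain the stated generating function for $\sum_{n\geq 0}C_{\Gamma_n^{(p)}}(x)t^n$, and $x$ by $x+q$ to obtain the one for $\sum_{n\geq 0}D_{\Gamma_n^{(p)}}(x,q)t^n$. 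This completes the proof.
\end{proof}
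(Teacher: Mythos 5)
Your proof is correct and follows essentially the same route as the paper: it invokes the daisy-cube property of $\Gamma_n^{(p)}$, applies the identity $D_G(x,q)=C_G(x+q-1)=W_G(x+q)$ from \cite[Corollary 2.6]{KM-2019a}, and substitutes $x\mapsto x+1$ (resp.\ $x\mapsto x+q$) termwise in the generating function of Theorem~\ref{thm:weightfibgene}. The only addition is your explicit remark that the substitution commutes with forming the series in $t$, which the paper leaves implicit.
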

Starting from the alternate expression~(\ref{eq:walt}) of the weight polynomial we obtain an other expression of the cube polynomial.
%\begin{theorem} {\rm \cite[Theorem~3.2 and Corollary~3.3]{KM-2012a}}\label{thm:cubepolyfib}\index{subject}{number of hypercubes!in $\Gamma_n$}
%If $n\ge 0$, then $C_{\Gamma_n}(x)$ is of degree $\left\lfloor \frac{n+1}{2}\right\rfloor$. Moreover,  
%$$C_{\Gamma_n}(x)=\sum_{m = 0}^{\left\lfloor \frac{n+1}{2}\right\rfloor}\binom{n-m+1}{m}(1+x)^{m}\,,$$
%and the number of induced subgraphs of $\Gamma_n$ isomorphic to  $Q_k$ is 
%$$c_k(\Gamma_n)=\sum_{i = k}^{\left\lfloor \frac{n+1}{2}\right\rfloor}\binom{n-i+1}{i}\binom{i}{k}\,.$$
%\end{theorem}

\begin{theorem} \label{th:ccube}
If $n\ge 0$, then  $C_{\Gamma_n^{(p)}}(x)$ is of degree  $\left\lfloor \frac{(n+1)(p-1)}{p}\right\rfloor$. Moreover
\begin{equation*}
C_{\Gamma_n^{(p)}}(x)=\sum_{a = 0}^{\left\lfloor \frac{(n+1)(p-1)}{p}\right\rfloor}\binom{n-a+1}{a}_{p-1}(1+x)^{a}
\end{equation*} and
\begin{equation*}
D_{\Gamma_n^{(p)}}(x)=\sum_{a = 0}^{\left\lfloor \frac{(n+1)(p-1)}{p}\right\rfloor}\binom{n-a+1}{a}_{p-1}(x+q)^{a}\,.
\end{equation*}
The number of induced subgraphs of $\Gamma_n^{(p)}$ isomorphic to  $Q_k$  is 
\begin{equation*}
c_{k}(\Gamma_n^{(p)})=\sum_{a = k}^{\left\lfloor \frac{(n+1)(p-1)}{p}\right\rfloor}\binom{n-a+1}{a}_{p-1} \binom{a}{k}
\end{equation*}
and the number of those at distance $d$ of $0^n$ is
\begin{equation*}
  c_{k,d}(\Gamma_n^{(p)})=\binom{n-d-k+1}{d+k}_{p-1} \binom{d+k}{k}\,.
\end{equation*}
\end{theorem}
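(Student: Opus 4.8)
The plan is to deduce every assertion from the daisy-cube identity, avoiding any direct manipulation of generating functions. Since replacing any $1$ by $0$ in a $p$-th order Fibonacci string again avoids $1^p$, the graph $\Gamma_n^{(p)}$, in its defining embedding in $Q_n$, is the daisy cube generated by the maximal $p$-th order Fibonacci strings; moreover $0^n\in{\cal F}^{(p)}_n$, so the distance cube polynomial with respect to $0^n$ is defined. Hence \cite[Corollary 2.6]{KM-2019a} applies and gives
\[
D_{\Gamma_n^{(p)}}(x,q)=C_{\Gamma_n^{(p)}}(x+q-1)=W_{\Gamma_n^{(p)}}(x+q),
\]
so everything reduces to substituting into the closed form (\ref{eq:walt}) for $W_{\Gamma_n^{(p)}}(x)$ and reading off coefficients.

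First I would set $x\mapsto x+q$ in (\ref{eq:walt}) to obtain $D_{\Gamma_n^{(p)}}(x,q)=\sum_{a}\binom{n-a+1}{a}_{p-1}(x+q)^a$, which is the third displayed formula, and then specialize $q=1$ (that is, use $C_{\Gamma_n^{(p)}}(x)=W_{\Gamma_n^{(p)}}(x+1)$) to get the second formula, with $(1+x)^a$ in place of $(x+q)^a$. For the coefficient formulas I would extract $[x^k]$ from the expression for $C_{\Gamma_n^{(p)}}(x)$, using $[x^k](1+x)^a=\binom{a}{k}$, giving $c_k(\Gamma_n^{(p)})=\sum_{a\ge k}\binom{n-a+1}{a}_{p-1}\binom{a}{k}$; and extract $[x^kq^d]$ from the expression for $D_{\Gamma_n^{(p)}}(x,q)$, noting that $[x^kq^d](x+q)^a$ equals $\binom{a}{k}$ when $a=k+d$ and $0$ otherwise, so only the single term $a=k+d$ contributes and $c_{k,d}(\Gamma_n^{(p)})=\binom{n-d-k+1}{d+k}_{p-1}\binom{d+k}{k}$.

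It remains to pin down the degree. All coefficients $\binom{n-a+1}{a}_{p-1}$ are nonnegative and each $(1+x)^a$ has nonnegative coefficients, so no cancellation occurs and $\deg C_{\Gamma_n^{(p)}}$ equals the largest index $a$ in the sum with $\binom{n-a+1}{a}_{p-1}\neq 0$. Since $\binom{b}{a}_{p-1}>0$ if and only if $a\le b(p-1)$, this largest $a$ is the biggest integer with $a\le (n-a+1)(p-1)$, namely $\lfloor (n+1)(p-1)/p\rfloor$, which is exactly the maximum weight of a vertex of $\Gamma_n^{(p)}$ established in Section~\ref{sec:enumprop}. I expect this last elementary point, together with the verification that the hypotheses of \cite[Corollary 2.6]{KM-2019a} hold (that $\Gamma_n^{(p)}$ is a daisy cube in the embedding used and that $0^n$ belongs to it), to be the only steps needing any comment; everything else is mechanical substitution and coefficient bookkeeping.
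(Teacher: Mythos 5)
Your proposal is correct and follows essentially the same route as the paper: apply the daisy-cube identity $D_G(x,q)=C_G(x+q-1)=W_G(x+q)$ to the closed form (\ref{eq:walt}) of the weight polynomial, then read off coefficients to get $c_k$ and $c_{k,d}$, with the degree claim settled by the nonnegativity of the summands and the support condition on $p$-nomial coefficients. The paper additionally sketches an alternative combinatorial derivation of $c_k$ and $c_{k,d}$ by counting induced $k$-cubes via their top vertices, but its primary argument is the same substitution-and-extraction you describe.
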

\begin{proof}
It is straightforward to derive the expression of $c_{k}(\Gamma_n^{(p)})$ from that of $C_{\Gamma_n^{(p)}}(x)$. The same arguments applied to the distance cube polynomial give the expression for $c_{k,d}(\Gamma_n^{(p)})$. The value of $c_{k}(\Gamma_n^{(p)})$ can also be obtained by combinatorial arguments by counting the induced $k$-cubes by their top vertex. Indeed the top vertex $u$ of a subgraph isomorphic to a $Q_k$ satisfies $w(u)\geq k$. There exist $\binom{n-a+1}{a}_{p-1}$ vertices of weight $a\geq k$. Choose a set $K$ of $k$ coordinates among the $a$ such that $u_i=1$. A $Q_k$ with top vertex $u$ is induced by the $2^k$ vertices obtained by varying these $k$ coordinates. Therefore $u$ is the top vertex of $\binom{a}{k}$ different induced $k$-cubes. Since an induced $k$-cube with a top vertex of weight $a$ is at distance $a-k$ of $0^n$ the expressions for $c_{k}(\Gamma_n^{(p)})$ and $c_{k,d}(\Gamma_n^{(p)})$ follow.
\end{proof}\qed

Let $k\geq0$. Since $c_{k}(\Gamma_n^{(p)})=\sum_{w\geq k}\binom{n-w+1}{w}_{p-1} \binom{w}{k}$ the generating sequence of the sequence $ (c_{k}(\Gamma_n^{(p)}))_{n\geq 0}$ can be expressed from the generating function of the weight polynomial determined in Theorem~\ref{thm:weightfibgene}.
Indeed consider the weight polynomial $W_{\Gamma_n^{(p)}}=g(x)=\sum_{w\geq 0}\binom{n-w+1}{w}_{p-1}\,x^w$.
The $k$-th derivative of $g(x)$ is $g^{[k]}(x)=\sum_{w\geq k}\binom{n-w+1}{w}_{p-1}\,\frac{w!}{(w-k)!}\,x^{w-k}$ and therefore $c_{k}(\Gamma_n^{(p)})$ is the value in $x=1$ of $\frac{1}{k!}g^{[k]}(x)$. We have thus the following result that give, for example, another proof of the expressions of the generating function of the size of $\Gamma_n^{(p)}$ in Theorem~\ref{th:genesize}. 

\begin{corollary}
For $k\geq 0$ the generating function of the sequence $(c_{k}(\Gamma_n^{(p)})_{n\geq0}$ of 
the number of induced subgraphs of $\Gamma_n^{(p)}$ isomorphic to  $Q_k$  is 
\begin{equation*}
\sum_{n\geq0}{c_{k}(\Gamma_n^{(p)})}t^n= \frac{1}{k!}h(1,t)
\,,
\end{equation*}
where 
\begin{equation*}
h(x,t)=\frac{\partial^k}{\partial x^k} \left(\frac{1+xt+x^2t^2+\dots+x^{p-1}t^{p-1}}{1-t(1+xt+x^2t^2\dots+x^{p-1}t^{p-1})}\right)\,.
\end{equation*}
\end{corollary}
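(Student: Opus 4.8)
The plan is to read the result off directly from the closed form for $c_k(\Gamma_n^{(p)})$ obtained in Theorem~\ref{th:ccube} together with the generating function of the weight enumerator in Theorem~\ref{thm:weightfibgene}. First I would recall from Theorem~\ref{th:ccube} that $c_k(\Gamma_n^{(p)})=\sum_{w\geq k}\binom{n-w+1}{w}_{p-1}\binom{w}{k}$, and rewrite $\binom{w}{k}=\frac{1}{k!}\,\frac{w!}{(w-k)!}$. Setting $g(x)=W_{\Gamma_n^{(p)}}(x)=\sum_{w\geq 0}\binom{n-w+1}{w}_{p-1}x^w$, the $k$-th derivative in $x$ is $g^{[k]}(x)=\sum_{w\geq k}\binom{n-w+1}{w}_{p-1}\frac{w!}{(w-k)!}x^{w-k}$, so that $c_k(\Gamma_n^{(p)})=\frac{1}{k!}g^{[k]}(1)$.

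Next I would multiply by $t^n$ and sum over $n\geq 0$. All manipulations take place in the ring of formal power series $\ZZ[x][[t]]$: for each fixed $n$ the coefficient of $t^n$ in $\sum_{n\geq 0}W_{\Gamma_n^{(p)}}(x)t^n$ is the polynomial $W_{\Gamma_n^{(p)}}(x)$, of bounded degree in $x$, hence $\frac{\partial^k}{\partial x^k}$ acts coefficientwise in $t$ and commutes with $\sum_{n\geq 0}(\cdot)t^n$, and the substitution $x=1$ may equally be carried out before or after summation. Therefore
\[
\sum_{n\geq 0}c_k(\Gamma_n^{(p)})t^n=\frac{1}{k!}\left(\frac{\partial^k}{\partial x^k}\sum_{n\geq 0}W_{\Gamma_n^{(p)}}(x)t^n\right)\Bigg|_{x=1}.
\]

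Substituting the closed form of $\sum_{n\geq 0}W_{\Gamma_n^{(p)}}(x)t^n$ from Theorem~\ref{thm:weightfibgene}, the expression inside the $k$-fold derivative is exactly the rational function defining $h(x,t)$, so the right-hand side equals $\frac{1}{k!}h(1,t)$, which is the claim. There is essentially no obstacle here beyond bookkeeping; the only point worth stating explicitly is the legitimacy of exchanging differentiation in $x$, summation over $n$, and evaluation at $x=1$, which is immediate in $\ZZ[x][[t]]$. As a consistency check, for $k=1$ one computes $\frac{\partial}{\partial x}\!\left(N/(1-tN)\right)=N'/(1-tN)^2$ with $N=1+xt+\dots+x^{p-1}t^{p-1}$, and evaluating at $x=1$ recovers $(t+2t^2+\dots+(p-1)t^{p-1})/(1-t-t^2-\dots-t^p)^2$, i.e.\ the generating function of the size of $\Gamma_n^{(p)}$ from Theorem~\ref{th:genesize}.
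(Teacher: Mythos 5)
Your proposal is correct and follows essentially the same route as the paper: it uses the identity $c_k(\Gamma_n^{(p)})=\sum_{w\geq k}\binom{n-w+1}{w}_{p-1}\binom{w}{k}=\frac{1}{k!}W_{\Gamma_n^{(p)}}^{[k]}(1)$ and then differentiates the generating function of the weight polynomial from Theorem~\ref{thm:weightfibgene} term by term. Your added remarks on the legitimacy of exchanging $\partial^k/\partial x^k$, the sum over $n$, and evaluation at $x=1$ in $\ZZ[x][[t]]$, together with the $k=1$ consistency check against Theorem~\ref{th:genesize}, are welcome but do not change the argument.
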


%%%%%%%%%%%%%%%%%%%%%%%%%%%%%%%%%%%%%%%%%%%%%%%%%%%%%%%%%%
%%%%%%%%%%%%%%%%%%%%%%%%%%%%%%%%%%%%%%%%%%%%%%%%%%%%%%%%%%%
%

%%%%%%%%%%%%%%%%%%%%%%%%%%%%%%%%%%%%%%%%%%%%%%%%%%%%%%%%%%
%%%%%%%%%%%%%%%%%%%%%%%%%%%%%%%%%%%%%%%%%%%%%%%%%%%%%%%%%%
\section{Maximal hypercubes in Fibonacci $p$-cubes}
\label{sec:maximalcube}
%%%%%%%%%%%%%%%%%%%%%%%%%%%%%%%%%%%%%%%%%%%%%%%%%%%%%%%%%%
%%%%%%%%%%%%%%%%%%%%%%%%%%%%%%%%%%%%%%%%%%%%%%%%%%%%%%%%%%

Is is proved  in \cite {M-2012a} that the number of maximal hypercubes of dimension $k$ in $\Gamma_n$ is $$h_k(\Gamma_n)=\binom{k+1}{n-2k+1}\,.$$
This number is thus the number of vertices of weight $n-2k+1$ in $\Gamma_{n-k+1}$ and indeed this result is obtained by an explicit one to one mapping between top vertices of maximal hypercubes of dimension $k$ in $\Gamma_n$ and
Fibonacci strings of length $n-k+1$ with Hamming weight $n-2k+1$. Let us recall that Fibonacci strings can be seen as Fibonacci $1$-strings or $2$nd order Fibonacci strings.

In this section we will generalize this result, for $p\geq1$, exhibiting a bijection  between maximal hypercubes in Fibonacci $p$-cubes $\Gamma^{p}_n$ and vertices of weight $n-(p+1)k+p$ in the $p+1$-th order Fibonacci cube $\Gamma^{(p+1)}_{n-pk+p}$.

 Let $H$ be an induced subgraph of $Q_n$ isomorphic to some $Q_k$. For $i\notin \supp(H)$, 
we will denote by $H\widetilde{+}\delta_i$ the subgraph induced by 
$V(H)\cup\{x+\delta_i \st x\in V(H)\}$. Note that $H\widetilde{+}\delta_i$ is isomorphic to $Q_{k+1}$.

Let us thus first characterize top vertices of maximal hypercubes in $\Gamma_n^p$.

\begin{lemma}\label{lemma:vertexmax} 
 If $H$ is a maximal hypercube of dimension $k$ in $\Gamma_n^p$, then $b(H)=0^n$ and $t(H)=0^{l_0}10^{l_1}\dots 10^{l_i}\dots10^{l_{k}}$, where $\sum_{i=0}^k{l_i}=n-k$, $l_0, l_k \in [[0,p]]$, and $l_i\in [[p,2p]]$ for $i \in [[1,k-1]]$.
Furthermore, any such vertex $t(H)$ is the top vertex of a unique maximal hypercube.
 \end{lemma}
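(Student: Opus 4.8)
The plan is to analyze the structure of a maximal hypercube $H$ in $\Gamma_n^p$ via its characterizing pair $(t(H), b(H))$ from Proposition~\ref{pro:bt}, and to translate ``maximality'' into local conditions on the run-lengths of zeros in $t(H)$.

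First I would show that $b(H)=0^n$. Suppose some coordinate $i$ satisfies $b_i=1$. Since $b=b(H)$ has a $1$ in coordinate $i$ and $i\notin\supp(H)$ (as $i\in\supp(H)$ would force $b_i=0$), every vertex of $H$ has a $1$ in coordinate $i$. Now consider flipping coordinate $i$ to $0$ in all vertices of $H$: this yields $b+\delta_i \le x$ for all... more carefully, the key point is that $\Gamma_n^p$ is a daisy cube (it is generated, as for Fibonacci cubes, by its maximal Fibonacci $p$-strings, since deleting a $1$ from a Fibonacci $p$-string yields a Fibonacci $p$-string), hence downward closed in the order $\le$ inside its proper embedding. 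Therefore $H\widetilde{+}\delta_i$ — formed by adjoining the copy of $H$ obtained by setting coordinate $i$ to $0$ — has all its vertices $\le t(H)\in V(\Gamma_n^p)$, so by the daisy-cube property all these vertices lie in $\Gamma_n^p$; thus $H\widetilde{+}\delta_i$ is an induced $Q_{k+1}$ in $\Gamma_n^p$ strictly containing $H$, contradicting maximality. Hence $b(H)=0^n$, and so $t(H)$ has weight $k$ and $\supp(H)=\{i:t(H)_i=1\}$; write $t(H)=0^{l_0}10^{l_1}\cdots 10^{l_k}$ with $\sum l_i = n-k$.

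Next I would extract the run-length conditions. Since $t(H)\in\mathcal F_n^p$, consecutive $1$s are separated by at least $p$ zeros, which gives $l_i\ge p$ for $i\in[\![1,k-1]\!]$ immediately; and $l_0,l_k\ge 0$ trivially. The content of maximality is the \emph{upper} bounds. For the claim $l_0\le p$: if $l_0\ge p+1$, flip the coordinate at position $l_0$ (the last zero of the leading block) to $1$; the resulting string still has $\ge p$ zeros before the next $1$ and nothing before it, so it is in $\mathcal F_n^p$, and $H\widetilde{+}\delta_{l_0}$ is a larger induced cube — contradiction. Symmetrically $l_k\le p$ using the first zero of the trailing block. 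For an internal block, $l_i\le 2p$ for $i\in[\![1,k-1]\!]$: if $l_i\ge 2p+1$, flip a coordinate strictly inside that block of zeros, at a position leaving at least $p$ zeros on each side of the new $1$ (possible exactly when $l_i-1\ge 2p$); again the new string is a Fibonacci $p$-string and we enlarge $H$. This establishes that the stated conditions are \emph{necessary}. For the converse — that if $t(H)$ satisfies all the run-length conditions then $H$ (the cube with bottom $0^n$, top $t(H)$) is maximal — I would argue that any enlargement $H\widetilde{+}\delta_i$ with $i$ a zero-coordinate of $t(H)$ produces a top vertex $t(H)+\delta_i$ with a run of zeros of length $<p$ somewhere (either a leading/trailing run shortened below... no: shortened, but the neighbouring gap): flipping a zero in block $j$ to a $1$ splits that block into two pieces summing to $l_j-1$; for an internal block $l_j-1\le 2p-1$ forces one piece $\le p-1$, violating the Fibonacci $p$-condition against an adjacent existing $1$; for the end blocks $l_0-1\le p-1$ or $l_k-1\le p-1$ directly violates the separation from the first/last $1$ of $t(H)$. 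Hence $t(H)+\delta_i\notin\mathcal F_n^p$, so no enlargement exists and $H$ is maximal. Uniqueness is automatic: $H$ is determined by $(t(H),0^n)$, i.e. by $t(H)$ alone.

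I expect the main obstacle to be the clean handling of the boundary (end) blocks versus internal blocks — getting the asymmetric ranges $[\![0,p]\!]$ versus $[\![p,2p]\!]$ exactly right, and in particular making sure the enlargement argument in the converse direction correctly produces a forbidden short run of zeros in every case, including degenerate small cases such as $k=0$ (where $t(H)=0^n$ with $l_0=n$, forcing $n\le p$, consistent with $\Gamma_n^p=Q_n$ there) and $k=1$ (no internal blocks). The daisy-cube observation does the heavy lifting for $b(H)=0^n$ cheaply; the rest is a careful but routine run-length bookkeeping.
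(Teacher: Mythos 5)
Your proposal follows the paper's proof essentially step for step: $b(H)=0^n$ by maximality (the paper phrases this via $H\widetilde{+}\delta_j$ being induced, because deleting a $1$ from a Fibonacci $p$-string keeps it a Fibonacci $p$-string — exactly your daisy-cube/downward-closure observation), then run-length bookkeeping on $t(H)$ in both directions. One step, as written, is wrong: to rule out $l_0\geq p+1$ you flip \emph{the last zero of the leading block} (position $l_0$), and symmetrically the \emph{first} zero of the trailing block for $l_k$. Those are precisely the zeros adjacent to an existing $1$ of $t(H)$, so flipping them creates two consecutive $1$s and the resulting string is \emph{not} in ${\cal F}^p_n$ — that enlargement does not exist and yields no contradiction. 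The coordinate you need is position $1$ (resp.\ position $n$), which is what your own justification ("$\geq p$ zeros before the next $1$ and nothing before it") actually describes and what the paper uses ($H\widetilde{+}\delta_1$). With that correction the argument coincides with the paper's; the internal-block case and the converse are handled the same way in both.

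Two smaller points. First, $\Gamma_n^p$ (the Fibonacci $p$-cube) is the star $K_{1,n}$ for $n\leq p+1$, not $Q_n$; you have confused it with $\Gamma_n^{(p)}$. Consequently your sanity check of the $k=0$ case is off: for $1\leq n\leq p$ the vertex $0^n$ is \emph{not} a maximal hypercube (it lies in an edge of the star), so the converse direction really requires $k\geq 1$ — note that your converse argument also silently assumes there is an existing $1$ against which the new $1$ violates the separation condition. This degeneracy is inherited from the paper's statement and is harmless for Theorem~\ref{th:FiboMax}, which disposes of $k=0$ separately, but your claim that the $k=0$ reading is "consistent" is not correct.
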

 \begin{proof}
 Let $H$ be a maximal hypercube in $\Gamma_n^p$.
 Assume there exists an integer  $j$ such that  $b(H)_j = 1$. 
Then $H\widetilde{+}\delta_j$ must be an induced subgraph of $\Gamma_n^p$, which contradicts the 
maximality of $H$. 
   
 Therefore $t(H)$ is a Fibonacci $p$-string of weight $k$  and thus $t(H)=0^{l_0}10^{l_1}\dots 10^{l_i}\dots10^{l_{k}}$ with $l_0, l_k\geq0$ and $l_i\geq p$ for $i \in [[1,k-1]]$ where $\sum_{i=0}^k{l_i}=n-k$.
 
Assume that $l_0\geq p+1$. Then for any vertex $x$ of $H$ obviously $x+\delta_1$ $\in V(\Gamma_n^p)$.  Therefore  $H$ is contained in $H\widetilde{+}\delta_1$ an induced $Q_{k+1}$ of $\Gamma_n^p$, which again
contradicts the maximality of $H$. The case  $l_k\geq p+1$ is  similar by symmetry.

Assume now $l_i\geq2p+1$, for some $i\in [[1,k-1]]$. Let $j=i+\sum_{p=0}^{i-1}{l_p}$. We thus have $t(H)_j=1$, $t(H)_{j+l_i+1}=1$, $t(H)_{j+p+1}=0$,  $p$ $0$s between $t(H)_{j}$ and $t(H)_{j+p+1}$
and  $l_i-p-1$ $0$s between $t(H)_{j+p+1}$ and $t(H)_{j+l_i+1}$. Note that $l_i-p-1\geq p$. Therefore for any vertex $x$ of $H$ we have thus $x+\delta_{j+p+1}$ $\in V(\Gamma_n^p)$ and $H$ is not maximal, a contradiction.

Conversely, consider a vertex $z=0^{l_0}10^{l_1}\dots 10^{l_i}\dots10^{l_{k}}$, where $\sum_{i=0}^k{l_i}=n-k$, $l_0, l_k \in [[0,p]]$, and $l_i\in [[p,2p]]$ for $i \in [[1,k-1]]$. Then the couple $(t(H)=z,b(H)=0^n)$ defines a unique hypercube $H$ in $Q_n$ isomorphic 
to $Q_k$. Clearly all vertices of $H$ are Fibonacci $p$-strings. Note that for any $i\notin \supp(H)$  the string $z+\delta_i$ is not a Fibonacci $p$-string so $H$ is maximal.
\end{proof}
\begin{theorem}
\label{th:FiboMax}
If $0\leq k \leq n$, and $h_k(\Gamma_n^p)$ is the number of maximal hypercubes of dimension $k$ in $\Gamma_n^p$, then
$$h_k(\Gamma_n^p)=\binom{k+1}{n-(p+1)k+p}_{p}\,.$$
\end{theorem}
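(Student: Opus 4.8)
The proof will be a direct counting argument using the characterization of top vertices of maximal hypercubes obtained in Lemma~\ref{lemma:vertexmax}. By that lemma, $h_k(\Gamma_n^p)$ equals the number of binary strings of the form $0^{l_0}10^{l_1}\cdots10^{l_k}$ with $\sum_{i=0}^k l_i = n-k$, subject to $l_0,l_k\in[\![0,p]\!]$ and $l_i\in[\![p,2p]\!]$ for $i\in[\![1,k-1]\!]$. So the whole task reduces to counting integer solutions of this constrained composition problem and recognizing the answer as a $p$-nomial coefficient $\binom{k+1}{n-(p+1)k+p}_p$.

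First I would reduce the interior parts by substituting $l_i' = l_i - p$ for $i\in[\![1,k-1]\!]$, so that $l_i'\in[\![0,p]\!]$. Then the constraint becomes
\[
l_0 + l_1' + \cdots + l_{k-1}' + l_k = (n-k) - p(k-1) = n - (p+1)k + p,
\]
where now every one of the $k+1$ variables $l_0, l_1', \dots, l_{k-1}', l_k$ ranges over $[\![0,p]\!]$. The number of such solutions is, by definition, the coefficient of $x^{\,n-(p+1)k+p}$ in $(1+x+\cdots+x^p)^{k+1}$, which is exactly $\binom{k+1}{n-(p+1)k+p}_{p}$ — note the subscript is $p$ here, matching the $(p+1)$-nomial coefficients, consistent with the notation $\binom{b}{a}_{p-1}$ being the coefficient in $(1+x+\cdots+x^{p-1})^b$. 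This gives the formula. One should also observe that the formula automatically handles degenerate small cases: when $k=0$ the condition collapses to $l_0 = n$ with $l_0\in[\![0,p]\!]$ (there is a single maximal hypercube, the vertex $0^n$, provided $n\le p$), and the $p$-nomial coefficient $\binom{1}{n}_p$ correctly equals $1$ if $n\le p$ and $0$ otherwise; when $k=1$ there are no interior parts and the count is the coefficient of $x^{n-2}$ in $(1+\cdots+x^p)^2$, again matching.

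The routine part is the substitution and the appeal to the definition of $p$-nomial coefficients; there is essentially no obstacle once Lemma~\ref{lemma:vertexmax} is in hand. If anything, the one point requiring a moment's care is the bookkeeping with indices and the boundary blocks $l_0, l_k$: unlike the interior blocks they are allowed to be as small as $0$ and only as large as $p$, so they do not need the shift, and it is this asymmetry (two ``short'' end-blocks in $[\![0,p]\!]$ and $k-1$ ``long'' inner blocks in $[\![p,2p]\!]$, each of which after the shift also lives in $[\![0,p]\!]$) that produces a clean count over $k+1$ identically-ranged variables. I would also remark that this recovers the known Fibonacci-cube result $h_k(\Gamma_n)=\binom{k+1}{n-2k+1}$ by setting $p=1$, since then $\binom{k+1}{n-2k+1}_1$ is an ordinary binomial coefficient, and that, paralleling the stated bijection, $h_k(\Gamma_n^p)$ is the number of vertices of weight $n-(p+1)k+p$ in $\Gamma^{(p+1)}_{n-pk+p}$, by Theorem~\ref{th:weight} applied with parameter $p+1$ and dimension $n-pk+p$.
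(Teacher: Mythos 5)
Your main argument is correct and is essentially the paper's own proof: both start from Lemma~\ref{lemma:vertexmax}, shift the interior blocks by $p$, and recognize the resulting count of $(k+1)$-tuples in $[\![0,p]\!]$ summing to $n-(p+1)k+p$ as $\binom{k+1}{n-(p+1)k+p}_{p}$; the paper merely routes this last step through a bijection with $(p+1)$-th order Fibonacci strings of length $n-pk+p$ and Theorem~\ref{th:weight}, which is the same computation.

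The one genuine problem is your treatment of $k=0$. The substitution argument presupposes $k\ge 1$ (there are $k-1$ interior blocks, and the shift contributes $-p(k-1)$, which changes sign at $k=0$), and the converse half of Lemma~\ref{lemma:vertexmax} fails for $k=0$ when $n\ge 1$: the vertex $0^n$ is contained in the edge joining it to $10^{n-1}$, so it is \emph{not} a maximal hypercube, contrary to your parenthetical claim that it is one whenever $n\le p$. Moreover the theorem's formula at $k=0$ is $\binom{1}{n+p}_{p}$, which equals $1$ iff $n=0$, not $\binom{1}{n}_{p}$; the miscomputed exponent hides the fact that your claimed count ($1$ for all $n\le p$) contradicts the very formula you are proving. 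The paper avoids all of this by disposing of $k=0$ separately before invoking the lemma, and you should do the same. (Your $k=1$ sanity check also has an arithmetic slip: the relevant exponent is $n-1$, not $n-2$, though there the conclusion is unaffected.)
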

\begin{proof}
This is clearly true for $k=0$ so assume $k\geq1$. Since maximal hypercubes of $\Gamma_n^p$ are characterized by their top vertices, let us consider the set $T$ of strings which can be written as $0^{l_0}10^{l_1}\dots 10^{l_i}\dots10^{l_k}$ where $\sum_{i=0}^k{l_i}=n-k$, $l_0, l_k\in [[0,p]]$, and $l_i\in [[p,2p]]$ for $i \in [[1,k-1]]$. Let $l'_i =l_i-p$ for $i \in [[1,k-1]]$, $l'_0 =l_0$, and $l'_k =l_k$. We have thus a one to one mapping between $T$ and the set of strings $D=\{0^{l'_0}10^{l'_1}\dots 10^{l'_i}\dots10^{l'_{k}}\}$, where $\sum_{i=0}^k{l'_i}=n-k-(k-1)p=n-(p+1)k+p$ 
with $l'_i\in [[0,p]]$ for $i \in \{0,\dots,k\}$. 
This set is in bijection with  the set $E=\{1^{l'_0}01^{l'_1}\dots 01^{l'_i}\dots01^{l'_{k}}\}$. Since $E$ is the set of $(p+1)$-th Fibonacci strings of length $n-kp+p$ and Hamming weight $n-(p+1)k+p$  we are done by Theorem~\ref{th:weight}.
\end{proof}\qed

\begin{corollary} 
\label{co:FiboMax211}
Let
$$
H_{\Gamma_n^p}(x)=\sum_{k\geq 0}h_k(\Gamma_n^p)x^k
$$ 
denote the enumerator polynomial of the maximal hypercubes of dimension $k$ in $\Gamma_n^p$. Then
\begin{equation}\label{eqn:HGamma}
H_{\Gamma_n^p}(x) = x\sum_{i=1}^{p+1}H_{\Gamma_{n-p-i}^p}(x) \ \ \ \ \ \text{for } n\geq2p+1
\end{equation}

with $H_{\Gamma_0^p}(x) =0$, $H_{\Gamma_n^p}(x) =nx$ for $n\in[[1,p+1]]$ and $H_{\Gamma_n^p}(x) =\frac{(n-p-1)(n-p)}{2}x^2+(2p-n+2)x$ for $n\in [[p+2,2p]]$.

The  generating function of the sequence $(H_{\Gamma_n^p}(x))_{n\geq 0}$ is 
\begin{equation}\label{eqn:HGx}
\sum_{n \ge 0}H_{\Gamma_n^p}(x)t^n=\frac{1+xt(1+t+t^2+\dots+t^p)(1+t+t^2+\dots+t^{p-1})}{1-xt^{p+1}(1+t+t^2+\dots+t^p)} \, . 
\end{equation}
\end{corollary}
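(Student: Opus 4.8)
The plan is to derive both recurrence~(\ref{eqn:HGamma}) and the generating function~(\ref{eqn:HGx}) from the explicit formula $h_k(\Gamma_n^p)=\binom{k+1}{n-(p+1)k+p}_{p}$ established in Theorem~\ref{th:FiboMax}, supplemented by direct checks of the small-$n$ base cases.

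For the recurrence, I would first reconcile it with Theorem~\ref{th:FiboMax} coefficient-by-coefficient: the coefficient of $x^k$ on the left is $\binom{k+1}{n-(p+1)k+p}_p$, while on the right it is $\sum_{i=1}^{p+1}\binom{k}{(n-p-i)-(p+1)(k-1)+p}_p=\sum_{i=1}^{p+1}\binom{k}{n-(p+1)k+p-i+1}_p$. Reindexing $j=n-(p+1)k+p$, the claimed identity becomes $\binom{k+1}{j}_p=\sum_{i=1}^{p+1}\binom{k}{j-i+1}_p=\binom{k}{j}_p+\binom{k}{j-1}_p+\dots+\binom{k}{j-p}_p$, which is exactly the generalized Pascal recurrence~(\ref{eqn:pascal}) with $(b,a)=(k+1,j)$. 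So the recurrence for $n\ge 2p+1$ (which guarantees all indices on the right are nonnegative, so Theorem~\ref{th:FiboMax} applies to each term) is immediate. The base cases $H_{\Gamma_0^p}=0$, $H_{\Gamma_n^p}=nx$ for $n\in[\![1,p+1]\!]$, and the quadratic expression for $n\in[\![p+2,2p]\!]$ I would verify by plugging small values of $k$ into $\binom{k+1}{n-(p+1)k+p}_p$ and using $\binom{1}{a}_p$ (which is $1$ for $0\le a\le p$ and $0$ otherwise) and $\binom{2}{a}_p$ (which counts compositions and equals $a+1$ for $0\le a\le p$); a short case analysis on which $k$ give nonzero contributions yields exactly the stated polynomials.

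For the generating function, the cleanest route is to set $F(x,t)=\sum_{n\ge 0}H_{\Gamma_n^p}(x)t^n$, multiply the recurrence~(\ref{eqn:HGamma}) by $t^n$ and sum over $n\ge 2p+1$. The right-hand side contributes $x\sum_{i=1}^{p+1}t^{p+i}F(x,t)=xt^{p+1}(1+t+\dots+t^p)F(x,t)$ up to the finitely many low-order correction terms coming from $n\le 2p$. Collecting those correction terms using the explicit base-case values gives $\bigl(1-xt^{p+1}(1+t+\dots+t^p)\bigr)F(x,t)=P(x,t)$ for some explicit polynomial $P$ supported on low powers of $t$; the claim is that $P(x,t)=1+xt(1+t+\dots+t^p)(1+t+\dots+t^{p-1})$. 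The main obstacle — really the only nontrivial bookkeeping in the whole argument — is verifying this polynomial identity for $P$: one must carefully account for which terms of $xt^{p+1}(1+\dots+t^p)F(x,t)$ overlap with the range $n\le 2p$ and subtract them against $\sum_{n=0}^{2p}H_{\Gamma_n^p}(x)t^n$.

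To keep that verification manageable I would expand $1+xt(1+t+\dots+t^p)(1+t+\dots+t^{p-1})$: the $x^0$ part is just $1$, and the $x^1$ part is $t\sum_{r=0}^{2p-1}c_r t^r$ where $c_r=\min(r,p-1,2p-1-r)+1$ is the (truncated) triangular convolution, so $P$ contributes $x$-coefficient $t+2t^2+\dots+pt^p+pt^{p+1}+\dots+2t^{2p-1}+t^{2p}$ (wait — more precisely the coefficients run $1,2,\dots,p,p,p-1,\dots,1$ as $r$ goes $0$ to $2p-1$), and no $x^2$ or higher. I would then independently compute the left side $\bigl(1-xt^{p+1}(1+\dots+t^p)\bigr)\sum_{n=0}^{2p}H_{\Gamma_n^p}(x)t^n$ modulo $t^{2p+1}$, matching the $x^0$, $x^1$, and $x^2$ graded pieces separately; the $x^2$ piece must cancel (the $\binom{n-p-1}{2}$-type terms against the $-xt^{p+1}(\dots)\cdot nx$ cross terms), the $x^1$ piece must reproduce the triangular sequence above, and the $x^0$ piece gives $1$. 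This is a finite, self-contained computation, and once it checks out the generating function follows by dividing through.
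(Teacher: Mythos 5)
Your derivation of the recurrence is exactly the paper's: apply the generalized Pascal identity~(\ref{eqn:pascal}) to $h_k(\Gamma_n^p)=\binom{k+1}{n-(p+1)k+p}_p$ and reindex. The base cases are also fine (the paper gets $H_{\Gamma_n^p}(x)=nx$ for $n\in[\![1,p+1]\!]$ by observing that $\Gamma_n^p$ is a star, but reading it off the $p$-nomial formula as you do works equally well). Where you genuinely diverge is the generating function: you sum the recurrence over $n\ge 2p+1$ and chase the boundary terms, whereas the paper never touches the recurrence for this part --- it expands the claimed right-hand side as $t^pH(x,t)=A(x,t)-(1+t+\dots+t^{p-1})$ with $A(x,t)=\sum_{m\ge0}(1+t+\dots+t^p)^{m+1}x^mt^{(p+1)m}$ and matches the coefficient of $x^kt^{n+p}$ directly against Theorem~\ref{th:FiboMax}. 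The paper's route buys a cleaner argument with no low-order bookkeeping at all; yours buys a derivation (rather than a verification) of the closed form from the recurrence. Your plan does close: I checked that the correction polynomial is indeed $1+xt(1+t+\dots+t^p)(1+t+\dots+t^{p-1})$, with the $x^2$ pieces cancelling via $\sum_{m=1}^{n-p-1}m=\binom{n-p}{2}$ as you predict. One trap to flag explicitly: the identity only balances if you take $H_{\Gamma_0^p}(x)=1$ (correct, since $\Gamma_0^p=K_1$ contains the maximal hypercube $Q_0$; note also that the right-hand side of~(\ref{eqn:HGx}) has constant term $1$ and the paper's own remark is that $h_0(\Gamma_n^p)=0$ only for $n\neq0$), not the value $0$ stated in the corollary; with $H_0=0$ your $x^0$ piece comes out to $0$ and your $x^1$ piece acquires an extra $t^{p+1}$, so you must correct that base case before the finite computation will check out.
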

\begin{proof}
Let $k\geq1$ and $n\geq2p+1$. By Theorem~\ref{th:FiboMax} and generalized Pascal identity (\ref{eqn:pascal}) we obtain 
\begin{eqnarray*}\label{eqn:brut}
h_k(\Gamma_n^p)&=&\binom{k+1}{n-(p+1)k+p}_{p}=\sum_{i=0}^{p}{\binom{k}{n-(p+1)k+p-i}_{p}}\\
&=&\sum_{i=0}^{p}{\binom{k}{n-p-i-1-(p+1)(k-1)+p}_{p}}=\sum_{i=0}^{p}{h_{k-1}(\Gamma_{n-p-i-1}^p)}\,.
\end{eqnarray*} 
Note that $h_0(\Gamma_n^p)=0$ for $n\neq 0$.  
The recurrence relation for $H_{\Gamma_n^p}(x)$ follows.

%Since the generalized Pascal identity for $\binom{b}{a}_p$ is satisfied even for $a<0$  the relation reduce to 
%\begin{equation} \label{eqn:particulier}
%h_k(\Gamma_n^p)=\sum_{i=0}^{n-p-1}{h_{k-1}(\Gamma_{n-p-i-1}^p)}
%\end{equation} 
%for $p+1\leq <2p-1$.

For $n\in[[1,p+1]]$ the Fibonacci $p$-cube $\Gamma_n^p$ is the star $S_n$ thus $H_{\Gamma_n^p}(x)=nx$.
For $n\in [[p+2,2p]]$  by Theorem~\ref{th:FiboMax}  $h_1(\Gamma_n^p)=2p-n+2$, $h_2(\Gamma_n^p)=\frac{(n-p-1)(n-p)}{2}$ and $h_k(\Gamma_n^p)=0$ for $k\geq 3$.

Let $H(x,t)$ be the right hand side of (\ref{eqn:HGx}) and  $$A(x,t)=\frac{1+t+t^2+\dots+t^p}{1-xt^{p+1}(1+t+t^2+\dots+t^p)}\,.$$
%Expanding $\frac{1}{1-t-yt^{p+1}}$ we deduce that $A(y,t)$ is a bivariate polynomial.
 An immediate calculation gives the following equality
\begin{equation}\label{eq:A(x,t)}
t^p H(x,t)=  A(x,t)-(1+t+t^2+\dots+t^{p-1})\,.
 \end{equation}
For any integers $k$ and $n$ the coefficient of $x^kt^n$  in $ H(x,t)$ is that of $x^kt^{n+p}$  on the right hand side of (\ref{eq:A(x,t)}) thus in $A(x,t)$.

By expansion of a $\frac{1}{1-u}$
\begin{equation*}
A(x,t)=\sum_{m\geq0}{(1+t+t^2+\dots+t^p)^{m+1}x^mt^{(p+1)m}}=\sum_{m\geq0}\sum_{r=0}^{p(m+1)}{\binom{m+1}{r}_{p}}{x^mt^{(p+1)m+r}}\,.
\end{equation*}
A summand  contributes to $x^kt^{n+p}$ if and only if  $m=k$ and $(p+1)m+r=n+p$. Furthermore this contribution is $\binom{m+1}{r}_{p}=\binom{k+1}{n-(p+1)k+p}_{p}$.
 
Therefore, by Theorem~\ref{th:FiboMax} $,
H(x,t)=\sum_{n\geq0 }{\sum_{k\geq0}}{h_k(\Gamma_n^p)}x^kt^n
$
and by identification we obtain identity~(\ref{eqn:HGx}).

\end{proof}
\qed

It would be particularly interesting to complete these results by determining the maximal hypercubes of $\Gamma_n^{(p)}$.

\bibliographystyle{plain}  

\bibliography{fibo-bib3}

\end{document}